\DeclareMathOperator*{\E}{\mathbb{E}}
\DeclareMathOperator*{\Prob}{\mathbb{P}}
\newtheorem{theorem}{Theorem}
\newtheorem{lemma}[theorem]{Lemma}
\newtheorem{corollary}[theorem]{Corollary}
\theoremstyle{definition}
\newcommand\eps{\varepsilon}
\renewcommand\leq{\leqslant}
\renewcommand\geq{\geqslant}
\renewcommand\le{\leqslant}
\renewcommand\ge{\geqslant}
\begin{document}

\title{Large Sumsets from Medium-Sized Subsets}
\author{B\'ela Bollob\'as \and Imre Leader \and Marius Tiba}

\address{Department of Pure Mathematics and Mathematical Statistics,
Wilberforce Road,
Cambridge, CB3 0WA, UK, and Department of Mathematical Sciences,
University of Memphis, Memphis, TN 38152, USA}\email{b.bollobas@dpmms.cam.ac.uk}

\address{Department of Pure Mathematics and Mathematical Statistics,
Wilberforce Road, Cambridge, CB3 0WA, UK}\email{i.leader@dpmms.cam.ac.uk}

\address{IMPA, Estrada Dona Castorina 110, Rio de Janeiro, Brazil}\email{sirmariustiba@gmail.com}

\thanks{The first author was partially supported by NSF grant DMS-1855745}

\begin{abstract}
The classical Cauchy--Davenport inequality gives a lower bound for the size of
the sum of two subsets of ${\mathbb Z}_p$, where $p$ is a prime. Our main aim in this paper is to
prove a considerable strengthening of this inequality, where we take only a small number of points from each of
the two subsets when forming the sum. One of our results is that
there is an absolute constant $c>0$ such that if $A$ and $B$ are subsets of ${\mathbb Z}_p$ with
$|A|=|B|=n\le p/3$ then there are subsets $A'\subset A$ and
$B'\subset B$ with $|A'|=|B'|\le c \sqrt{n}$ such that $|A'+B'|\ge 2n-1$. In fact, we show that one
may take any sizes one likes: as long as $c_1$ and $c_2$ satisfy $c_1c_2 \ge cn$ then we may choose
$|A'|=c_1$ and $|B'|=c_2$. We prove related
results for general abelian groups.
\end{abstract}

\maketitle

\section{Introduction}

The Cauchy--Davenport theorem~\cite{Cauchy, Dav1, Dav2} states that if $p$ is a prime and $A$ and $B$ are non-empty
subsets of ${\mathbb Z}_p$ with $|A|+|B|\le p+1$ then $|A+B|\ge |A|+|B|-1$. Intervals show that this bound is best possible.
Over the years, this classical result
was followed by a host of important contributions about sums of subsets of groups, including other abelian groups such as
 ${\mathbb Z}$ itself. For these
 contributions, see, among others,
 Mann~\cite{Mann1, Mann2}, Kneser~\cite{Knes},
 Erd\H{o}s and Heilbronn~\cite{ErdHeil}, Freiman~\cite{Frei-59, Frei-62, Frei-book, Frei-87}, Pl\"unnecke~\cite{Plu-70},
 Ruzsa~\cite{Ruz-89}, Dias da Silva and Hamidoune~\cite{DiHa}, Alon, Nathanson and Ruzsa~\cite{AlNaRu},
Shao~\cite{shao}, Stanchescu~\cite{Stan}, Breuillard, Green and Tao~\cite{BGT-doubling}, as well as the books of
Nathanson~\cite{Nathbook}, Tao and Vu~\cite{taobook} and Grynkiewicz~\cite{grynkiewicz-2}.

Recently, a new direction of research was started in ~\cite{BLT}: can we get similar bounds for the size of the sum
if $A+B$ is replaced by $A+B'$, where $B'$ is a {\em small} subset of $B$? Among other results, it was proved that if
$A$ and $B$ are finite non-empty subsets of ${\mathbb Z}$ with $|A| \ge |B|$ then $B'$ can be taken to be really small:
there are {\em three} elements $b_1, b_2, b_3 \in B$ such that $|A+\{b_1,b_2,b_3\}| \ge |A|+|B|-1$. For
${\mathbb Z}_p$, it was shown that if
$A, B \subset {\mathbb Z}_p$ with $|A|=|B|=n\le p/3$ then $B$ has a subset $B'$ with at most $c$ elements
such that $|A+B'|\ge 2n-1$, where $c$ is an absolute constant. (Here again $p$ is prime, and for the rest of this paper $p$ will always denote a prime.)

Our aim in this paper is to prove that actually one can replace {\em both} $A$ and $B$ by appropriate small subsets.
In the result just mentioned, the product of the sizes of our two subsets in the sum, namely $A$ and $B'$, is $cn$,
and trivially we cannot ever get a sum of size linear in $n$ without the product of the sizes of the two sets being
linear in $n$. But, remarkably, one can indeed always choose subsets $A'$ of $A$ and $B'$ of $B$, of any desired
sizes, as long as the product of these sizes is linear in $n$.

The result mentioned in the Abstract has both sizes being a constant times $\sqrt{n}$, and the sets $A$ and $B$
themselves have size bounded away from $p/2$. The general form of our result is as follows.

\begin{theorem}\label{all_thm1}
  For all $\alpha, \beta>0$ there exists $c>0$ such that the following holds.
  Let $A$ and $B$ be non-empty subsets of $\mathbb{Z}_p$ with
  $\alpha |B| \leq |A| \leq \alpha^{-1} |B|$ and $|A|+|B| \leq (1-\beta)p$.
  Then, for any integers $1 \leq c_1 \leq |A|$ and $1 \leq c_2 \leq |B|$ such that $c_1c_2 \geq c\max(|A|,|B|)$,
  there exist subsets $A' \subset A$ and $B' \subset B$, of sizes $c_1$ and $c_2$ such that
  $|A'+B'| \geq |A|+|B|-1$.
\end{theorem}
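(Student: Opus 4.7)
My plan is to prove Theorem~\ref{all_thm1} by a dichotomy on the doubling of $A+B$. Set $n:=\max(|A|,|B|)$; the hypotheses give $|A|,|B|=\Theta_\alpha(n)$. The conclusion being monotone in $c_1,c_2$ (one may always enlarge $A'$ or $B'$ while preserving the sumset bound), I may assume $c_1c_2\sim Cn$ for a large constant $C=C(\alpha,\beta)$, and then fix a further threshold $K=K(\alpha,\beta,C)$.

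In Regime I, where $|A+B|\leq Kn$, I would invoke a Freiman--Ruzsa-type structure theorem in $\mathbb{Z}_p$, using $|A|+|B|\leq(1-\beta)p$, to place $A$ and $B$ inside arithmetic progressions $P_A,P_B$ sharing a common difference $d$, of lengths $O_{\alpha,\beta,K}(n)$, in which $A$ and $B$ have densities bounded below. After rescaling by $d^{-1}$, we reduce to the case where $P_A,P_B$ are integer intervals. I would then carry out a two-step pigeonhole: first extract a sub-AP $A^*\subset A$ of common difference $q$ and length $c_1$, where $q$ is controlled by the density; then extract a sub-AP $B^*\subset B$ of common difference $qc_1$ and length $c_2$. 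Their sum $A^*+B^*$ is then an AP of length $c_1c_2\geq Cn\geq|A|+|B|-1$, as required.

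In Regime II, where $|A+B|>Kn$, I would first apply the constant-size result of~\cite{BLT} to the pair $(A,B)$ to obtain $B_0\subset B$ with $|B_0|=k_0=O_{\alpha,\beta}(1)$ and $|A+B_0|\geq|A|+|B|-1$, and then inflate $B_0$ to $B'\supset B_0$ of exact size $c_2$ by adjoining arbitrary elements of $B\setminus B_0$, so that $|A+B'|\geq|A|+|B|-1$ still. It remains to locate $A'\subset A$ of size exactly $c_1$ with $|A'+B'|\geq|A|+|B|-1$. Because $|A+B|>Kn$ is far larger than the target and $c_1c_2\geq Cn$ provides slack of a factor $C$, a careful moment calculation on the representation function of $A+B'$, or equivalently a BLT-style covering argument applied in the opposite direction (reducing the $A$-side), should guarantee that a random or greedily-chosen $A'$ of size $c_1$ achieves the required bound.

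The main obstacle I anticipate is Regime I: from positive-density subsets of integer intervals one must extract sub-APs of \emph{prescribed} lengths $c_1,c_2$ and with the specific common differences $q$ and $qc_1$, lying entirely \emph{inside} $A$ and $B$ (not merely inside $P_A,P_B$). When $(c_1,c_2)$ is highly asymmetric (say $c_1=O(1)$ and $c_2=\Omega(n)$), the modulus $qc_1$ is large and $B^*$ must be long, so a naive density pigeonhole in an integer interval does not immediately produce the required progression inside $B$. Overcoming this will likely require iterating a Pl\"unnecke--Ruzsa inequality to pass to a cleaner subset of $B$, or invoking a sharper Freiman theorem guaranteeing nearly full density in $P_A,P_B$. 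A secondary issue is calibrating the threshold $K$ so that the two regimes cover the full range, and so that the random argument in Regime II still succeeds when $|A+B|$ is only slightly above $Kn$.
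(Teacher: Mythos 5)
Both regimes of your dichotomy have genuine gaps, and the one in Regime I is fatal rather than technical. A Freiman--Ruzsa structure theorem applied under $|A+B|\leq Kn$ only gives that $A$ and $B$ sit inside progressions $P_A,P_B$ of length $O_{K}(n)$ with density bounded below by a constant $\delta(K)$; it does not and cannot give density close to $1$. But a density-$\delta$ subset of an interval of length $\Theta(n)$ in general contains no arithmetic progression of length anywhere near $c_1\sim\sqrt{n}$: a random subset of density $1/2$ has longest AP of length $O(\log n)$, and even a $(1-\gamma)$-dense subset of an interval is only guaranteed runs of length $O(1/\gamma)$. So the pigeonhole extraction of $A^*\subset A$ of length $c_1$ and common difference $q$, and of $B^*\subset B$ of length $c_2$ and difference $qc_1$, simply does not exist, already in the balanced case $c_1=c_2=\sqrt{Cn}$ --- not only in the asymmetric case you flag --- and no iteration of Pl\"unnecke--Ruzsa or sharper Freiman theorem can rescue it, since the obstruction persists even at density $1-\gamma$. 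The paper's treatment of the near-extremal case is structurally different: after reducing (via Theorem~\ref{all_thm2} and Lemma~\ref{all_lem4}) to sets that are $\gamma$-close to intervals, it never looks for progressions inside $A$ and $B$; instead it fixes $d\approx c_2/k$, decomposes into fibres mod $d$, applies the technical theorem of \cite{BLT} (Theorem~\ref{CD_technical}) inside each fibre, and takes $A'$ to be a union of a few whole fibres of $A$ and $B'$ a union of small random fibre-subsets, with a separate endgame (Lemmas~\ref{all_lem7} and \ref{all_lem6}) for the mass of $A,B$ lying outside the intervals.

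In Regime II the step you describe as ``a careful moment calculation on the representation function \dots should guarantee that a random or greedily-chosen $A'$ of size $c_1$ achieves the required bound'' is precisely the main difficulty of the paper, not a routine verification. Note first that after padding $B_0$ to $B'$, the bound $|A+B_0|\geq|A|+|B|-1$ uses all of $A$; since $|B_0|=O(1)$, a typical element of $A+B_0$ has $O(1)$ representations and is hit by a random $A'$ of size $c_1=o(n)$ with probability $o(1)$, so $A'+B_0$ contributes almost nothing and the whole burden falls on showing that random $A'\subset A$, $B'\subset B$ with $|A'||B'|\geq Cn$ give $\mathbb{E}|A'+B'|\geq |A|+|B|-1$ whenever $|A+B|$ is large. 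That statement is exactly Theorem~\ref{implication_shao1-prov}, and its proof requires passing first to large subsets $A^*,B^*$ (random points of $A$ itself need not suffice) via an iterated popular/unpopular decomposition powered by Shao's almost-all Balog--Szemer\'edi--Gowers theorem; second-moment bounds on the representation function alone do not control how unevenly the popular sums are distributed. There is also a calibration problem at the interface: the range $|A|+|B|-1+\varepsilon\min(|A|,|B|)<|A+B|\leq Kn$, where $A,B$ need not be close to progressions, falls into your Regime I (so the AP extraction is unavailable) while also lacking the doubling-$>K$ slack you invoke in Regime II; the paper covers it by combining Theorem~\ref{implication_shao1-prov} with the Grynkiewicz stability theorem (Theorem~\ref{Freiman_Zp}) inside Theorem~\ref{all_thm2}. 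As written, neither regime of your argument closes.
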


It is rather surprising that this holds with no other restrictions on the values of $c_1$ and $c_2$, whatever the
form of the sets $A$ and $B$. We remark that the constant $c$ does have to depend on $\beta$: if our two sets
are allowed to have sizes whose sum approaches $p$ then taking random subsets shows that $c$ has to grow. Also, it is
easy to see that $c$ depends on $\alpha$. For example, if $B$ is far larger than $A$ then trivially taking $c$ points
from $B$ to be summed with all of $A$ will yield a sumset that is too small.

Our main tool is a similar result that is valid in general abelian groups. The reader will note that it is `worse'
than the above result in that there is an error term, but it is also far `better' because the lower bound comes from
the sum of two actual sets (the sets $A^*$ and $B^*$ below) rather than merely from the lower bound that comes from the
sum of their sizes (or, more precisely, the general Kneser lower bound that generalises the Cauchy--Davenport theorem).

\begin{theorem}\label{implication_shao1-prov} For all $K$ and $\varepsilon >0$ there exists $c$ such that the
  following holds.
  Let $A$ and $B$ be finite non-empty subsets of an abelian group, and let $1 \leq c_1 \leq |A|$ and
  $1 \leq c_2 \leq |B|$ be integers satisfying $c_1c_2 \geq c \max(|A|,|B|)$. Then there are subsets $A^* \subset A$
and $B^* \subset B$, with $|A^*|\ge (1-\varepsilon)|A|$ and
$|B^*|\ge (1-\varepsilon)|B|$, such that
if we select points $a_1, \hdots, a_{c_1}$ and $b_1,\hdots,b_{c_2}$ uniformly at random from $A^*$ and
$B^*$ then, writing $A'$ for $\{a_1, \hdots, a_{c_1}\}$ and $B'$ for $\{b_1, \hdots, b_{c_2}\}$, we have
$${\mathbb E}|A'+ B'|
 \geq \min\big( (1-\varepsilon)|A^*+ B^*|,\ K |A^*|,\ K |B^*| \big).$$
\end{theorem}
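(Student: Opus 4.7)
The plan is to combine a second-moment (Paley--Zygmund) estimate on the random sumset size with a regularization step that passes from $(A,B)$ to large subsets $(A^*,B^*)$ on which the representation function is essentially uniform.

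First, for any choice of $A^* \subset A$ and $B^* \subset B$, write $r(s) = |\{(a,b) \in A^* \times B^* : a+b=s\}|$, $\tau = c_1c_2/(|A^*||B^*|)$, and $X_s = |\{(a,b) \in A' \times B' : a+b=s\}|$. A standard hypergeometric computation gives $\E X_s = r(s)\tau$; furthermore, decomposing $X_s^2$ into diagonal and off-diagonal pair-contributions yields $\E X_s^2 \leq r(s)\tau(1 + r(s)\tau)$. Paley--Zygmund then delivers
\[
\Prob[s \in A'+B'] \;\geq\; \frac{(\E X_s)^2}{\E X_s^2} \;\geq\; \frac{r(s)\tau}{1+r(s)\tau},
\]
and summing over $s \in A^*+B^*$,
\[
\E|A'+B'| \;\geq\; \sum_{s \in A^*+B^*} \frac{r(s)\tau}{1+r(s)\tau}.
\]

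Second, I would select $A^*,B^*$ so that $r$ is approximately uniform on $A^*+B^*$, i.e.\ $r(s) \asymp R$ for some $R$, or equivalently the additive energy satisfies $E(A^*,B^*) = O((|A^*||B^*|)^2/|A^*+B^*|)$. The natural route is a dyadic decomposition: partition $A+B$ into level sets $L_i = \{s : 2^i \leq r_{A,B}(s) < 2^{i+1}\}$, locate a dominant level $i^*$, and then prune $A,B$ to $A^*,B^*$ whose sumset is essentially $L_{i^*}$ and on which $r_{A^*,B^*}(s) \asymp 2^{i^*}$. The naive dyadic pigeonhole only supplies a $1/\log$-density level, so upgrading to a $(1-\varepsilon)$-density extraction requires a Balog--Szemer\'edi--Gowers-type amplification applied to the bipartite incidence graph at level $i^*$; the constants produced by that amplification determine the dependence $c = c(K,\varepsilon)$.

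Third, once $r(s) \asymp R$ uniformly over $A^*+B^*$, we have $|A^*||B^*| \asymp R\,|A^*+B^*|$ and $r(s)\tau \asymp c_1c_2/|A^*+B^*|$, so the Paley--Zygmund estimate collapses to
\[
\E|A'+B'| \;\gtrsim\; \frac{c_1 c_2 \cdot |A^*+B^*|}{c_1c_2 + |A^*+B^*|}.
\]
In the regime $c_1c_2 \geq (K/\varepsilon)|A^*+B^*|$ this is at least $(1-\varepsilon)|A^*+B^*|$, yielding the first term of the minimum; otherwise $c_1c_2 < (K/\varepsilon)|A^*+B^*|$ and the right-hand side is at least $c_1c_2\cdot\varepsilon/(2K)$, which exceeds $K\max(|A^*|,|B^*|)$ once $c \geq 2K^2/\varepsilon$, using $c_1c_2 \geq c\max(|A|,|B|) \geq c\max(|A^*|,|B^*|)$.

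The main obstacle is the regularization in the second step: the second-moment bound and the final case split are essentially mechanical once $r$ has been flattened, but amplifying a dyadic-pigeonhole level of density $1/\log$ into a $(1-\varepsilon)$-density pair $(A^*,B^*)$ whose representation function is nearly constant is delicate, and is precisely where the quantitative dependence $c = c(K,\varepsilon)$ arises.
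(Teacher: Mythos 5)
Your first and third steps (the Paley--Zygmund bound $\Prob[s\in A'+B']\ge r(s)\tau/(1+r(s)\tau)$, using that two distinct representations of a fixed $s$ share neither coordinate, and the final dichotomy on $c_1c_2$ versus $|A^*+B^*|$) are fine, but the whole proof hinges on your second step, and that step as stated is not just delicate --- it is impossible. You cannot in general find $A^*\subset A$, $B^*\subset B$ with $|A^*|\ge(1-\varepsilon)|A|$, $|B^*|\ge(1-\varepsilon)|B|$ on which $r_{A^*,B^*}$ is approximately constant over $A^*+B^*$: already for $A=B=\{1,\dots,n\}$, any $(1-\varepsilon)$-dense subsets have $r(s)=1$ at the extreme sums and $r(s)\gg n$ in the middle, so $r$ can never be flattened to within constant factors. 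Likewise, the plan to ``prune $A,B$ to $A^*,B^*$ whose sumset is essentially $L_{i^*}$'' conflicts with the density requirement: take $A=B=I\cup S$ with $I$ an interval of size $n/2$ and $S$ a Sidon set of size $n/2$; the energy-dominant dyadic level comes from $I+I$ and has only $O(n)$ elements, yet every $(1-\varepsilon)$-dense subset still contains half of $S$ and hence has sumset of size $\gg n^2$, almost all of it at level $r=O(1)$. In such examples the conclusion is only true because of the $K|A^*|,K|B^*|$ terms in the minimum, and your sketch has no mechanism that produces them; a BSG-type amplification at the dominant level cannot, because the obstruction is not a loss of density but the coexistence of popular and unpopular sums in the sumset of any dense subsets.

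What is actually needed (and what the paper does) is a dichotomy rather than a flattening. Split $A+B$ into popular sums (hit at least $\alpha|B|$ times) and unpopular ones, and let $\Gamma$ be the pairs summing to popular elements. If $|\Gamma|<(1-\delta)|A||B|$, then at least $\delta|A||B|$ pairs land on sums of multiplicity $<\alpha|B|$, and your own first-moment/Paley--Zygmund computation already yields $\E|A'+B'|\ge K\max(|A|,|B|)$ once $c$ is large --- this is where the $K|A^*|,K|B^*|$ terms come from. If instead the popular restricted sumset is small and $|\Gamma|$ is large, one invokes Shao's almost-all Balog--Szemer\'edi--Gowers theorem (Theorem~\ref{thm_shao}) to pass to subsets of relative size $1-\varepsilon/s$ whose \emph{entire} sumset is at most the popular sumset plus a small error, and iterates this $s\approx 50K/\varepsilon$ times; since the sumset size cannot decrease substantially at every step, at some step almost all of $A_j+B_j$ is popular, and popular elements are hit with high probability by $c_1$ and $c_2$ random points. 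So you correctly sensed that a BSG-type input is required, but it must be used iteratively to control the full sumset of dense subsets when the popular part is small, not to make the representation function uniform --- the latter goal is unattainable and the argument built on it does not go through.
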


We remark that the terms $K |A^*|$ and $K |B^*|$ are only present to deal with unimportant cases: the key term is
$(1-\varepsilon)|A^*+ B^*|$. Thus the result is informally somehow saying that, in terms of sumsets, $A^*$ and $B^*$ may really be approximated by very small subsets of themselves (and indeed most subsets will do).

Interestingly, while this theorem is for general abelian groups, Theorem~\ref{all_thm1} is only about $\mathbb{Z}_p$. The passage
between these does require quite a lot of work.


The plan of the paper is as follows. In Section 2 we give various prerequisites that we shall need. Then in Section
3 we prove Theorem~\ref{implication_shao1-prov}, and also provide the consequence of it in $\mathbb{Z}_p$ that we use in
the proof of Theorem~\ref{all_thm1}.
In Section 4 we prove Theorem~\ref{all_thm1},
and the last section, Section 5, contains open problems.

\vspace{5pt}
Our notation is standard.
Sometimes we write `$x \mod d$' as shorthand for the infinite arithmetic progression
$\{ y \in \mathbb{Z}: y \equiv x \mod d \}$, and refer to it as a {\em fibre} mod $d$.
When $S$ is a subset of $\mathbb{Z}$ we often write $S^x$ for the intersection of this fibre with
$S$ -- when the value of $d$ is clear. (We sometimes write $S^x$ as $S^x_d$ when we want to stress the value of $d$.)
Thus $S^x=S\cap \pi^{-1}(x)$, where $\pi = \pi_d$ denotes
the natural projection from $\mathbb{Z}$ to $\mathbb{Z}_d$.

\vspace{5pt}
When we write a probability or an expectation over a finite set, we always assume that the elements
of the set are being sampled uniformly. Thus, for example,
for a finite set $X$ we denote the expectation and probability when we sample uniformly over
all $x \in X$ by  ${\mathbb E}_{x\in X} \text{ and } \Prob_{x\in X}$.
We also often sample uniformly over all $c$-sets of a given set $X$. In most of those cases,
we could instead sample $c$ elements uniformly and independently, but the notation would tend to get
unwieldy, and this is why we use the sampling over all $c$-sets instead.

\vspace{10pt}

Before we turn to the next section, let us draw attention to the superficial similarity of
our problems to a beautiful result of Ellenberg~\cite{Ell}. Given a prime $p$ and a positive
integer 
$d$, let $f(p^d)$ be the smallest integer such that for any sets $S, T \subset {\mathbb Z}_p^d$
there are subsets $S' \subset S$ and $T'\subset T$ satisfying $(S+T')\cup (S'+T)=S+T$
and $|S'|+|T'| \le f(p^d)$. Ellenberg proved that $f(p^d) \le (cp)^d$, where $c<1$ is an absolute
constant.

\vspace{10pt}

\section{Prerequisites}
In this section we collect together the various prerequisites that we will need. Each of these may be treated
as a `black box': knowledge of their proofs will not be required.

The first of the three theorems we shall need is due to Shao~\cite{shao}, and  concerns
restricted sums. Let $A$ and $B$ be subsets of an abelian group, and let
$\Gamma \subset A \times B$. The {\em $\Gamma$-restricted sum} of $A$ and $B$ is
$A+_{\Gamma}B=\{a+b: \ a\in A,\ b\in B, (a,b) \in \Gamma \}$. Here is the result of Shao.
\begin{theorem}\label{thm_shao}
For all $\varepsilon, K>0$ there exists $\delta>0$ such that the following holds. Let $G$ be an
abelian group and let $N \in \mathbb{N}$. Let $A,B \subset G$
be two subsets with $|A|, |B| \geq N$. Let $\Gamma \subset A \times B$ be a subset with
$|\Gamma|\geq (1-\delta)|A||B|$. If $|A+_{\Gamma}B| \leq KN$, then there
exist $A_0 \subset A$ and $B_0 \subset B$ such that
\[
|A_0| \geq (1-\varepsilon)|A|
\text{ and } |B_0|\geq (1-\varepsilon)|B| \text{ and } |A_0+B_0| \leq |A+_{\Gamma}B|+\varepsilon N.
\]
\end{theorem}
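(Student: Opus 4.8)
My plan is to read the statement as a \emph{removal lemma} for restricted sumsets. Write $S=A+_\Gamma B$, let $\mathcal B=(A\times B)\setminus\Gamma$ be the set of omitted pairs, so $|\mathcal B|\le\delta|A||B|$, and let $E=(A+B)\setminus S$ be the excess sumset. The key elementary remark is that \emph{every} $x\in E$ has all of its representations inside $\mathcal B$ (if $(a,b)\in\Gamma$ and $a+b=x$ then $x\in S$), so, writing $r(x)=|\{(a,b)\in A\times B:a+b=x\}|$, we get $\sum_{x\in E}r(x)\le|\mathcal B|\le\delta|A||B|$. Also, a typical row satisfies $|\Gamma_a|\ge(1-\sqrt\delta)|B|$ and $a+\Gamma_a\subseteq S$, so $(1-\sqrt\delta)|B|\le|S|\le KN$; hence $N\le|A|,|B|\le 2KN$ and all the set sizes are comparable to $N$. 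The theorem now amounts to: delete $\le\eps|A|$ elements of $A$ and $\le\eps|B|$ of $B$ so that the remaining product set meets the fibre $\{(a,b):a+b=x\}$ for at most $\eps N$ values $x\in E$; for then $|A^*+B^*|\le|S|+\eps N$.

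To arrange this I would split $E$ by multiplicity at a small threshold $\theta=\theta(\eps,K)$. The \emph{heavy} values $\{x\in E:r(x)\ge\theta N\}$ number at most $\delta|A||B|/(\theta N)\le 4\delta K^2N/\theta$, which is below $\tfrac{\eps}{2}N$ once $\delta\le\theta\eps/(8K^2)$; these I simply tolerate. The real content is the \emph{light} values $E_{\ell}=\{x\in E:1\le r(x)<\theta N\}$: I must kill all but $\tfrac{\eps}{2}N$ of them, i.e., find a small vertex set in the bipartite ``representation graph'' $\mathcal B_{E_\ell}=\{(a,b)\in\mathcal B:a+b\in E_\ell\}$ (on vertex classes $A$ and $B$) covering all but $\le\tfrac{\eps}{2}N$ of its edges. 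By K\"onig's theorem such a cover exists exactly when $\mathcal B_{E_\ell}$ has no matching of size $>\eps N$, so the whole problem reduces to: \emph{rule out a matching $(a_i,b_i)_{i\le m}$ of size $m>\eps N$ consisting of omitted pairs whose sums lie outside $S$ and have fewer than $\theta N$ representations.}

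This last point is the main obstacle, and it is where all of the additive-combinatorial structure has to be injected --- the subtlety being that the hypothesis bounds only the \emph{restricted} sumset, giving no a priori control on $|A+B|$. A large such matching yields sets $U=\{a_i\}$, $V=\{b_i\}$ of size $>\eps N$ whose $\Gamma$-density in $U\times V$ is $\ge 1-4\delta K^2/\eps^2$ (close to $1$) and for which the $\Gamma$-restricted sums off the diagonal lie in $S$; thus $(U,V)$ again satisfies the hypotheses, with $N'=m$ and restricted sumset of size $\le|S|\le(2K/\eps)m$. I would then run an energy/excess-decrement iteration on this self-similar configuration: each stage either produces the required deletion or produces such a sub-configuration, and one tracks a potential --- a weighted count of the surviving excess, or the size of the restricted sumset inside the stabiliser quotient furnished by Kneser's theorem~\cite{Knes}, sharpened using the Pl\"unnecke--Ruzsa inequality~\cite{Plu-70,Ruz-89} --- that strictly decreases and is bounded below because $|A+_\Gamma B|\le KN$, so the iteration must halt with a small matching. (A more structural variant, once one has forced $|A+B|=O_K(N)$ on a large subset, is to pass via a Freiman-type structure theorem to the model case where $A$ and $B$ are dense in bounded-dimensional boxes, where the excess decomposes into boundedly many ``one-sided'' pieces and is covered by deleting prefixes --- the guiding example being $A=B=\{0,\dots,n-1\}\subset\mathbb Z$.) The delicate feature throughout is the \emph{sharp additive} error $\eps N$: it is what makes the heavy/light split necessary and forces the covering to be done quantitatively rather than through a black-box application of Freiman's theorem.
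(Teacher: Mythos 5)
First, a point of orientation: the paper does not prove this statement at all --- it is Theorem~\ref{thm_shao}, quoted verbatim as a black box from Shao's paper \cite{shao} (it is essentially the main result there, the ``almost all'' Balog--Szemer\'edi--Gowers theorem), so there is no in-paper proof to compare against. Judged on its own, your proposal gets the easy reductions right: the comparability $N\le |A|,|B|\le 2KN$, the observation that every excess element $x\in(A+B)\setminus S$ has all its representations among the $\le\delta|A||B|$ omitted pairs, the heavy/light split with the heavy excess values numbering $O(\delta K^2N/\theta)\le\varepsilon N/2$, and the K\"onig reduction: it suffices to show that the omitted pairs whose sums are light excess values contain no matching of size $>\varepsilon N$, since a vertex cover of size $\le\varepsilon N\le\varepsilon\min(|A|,|B|)$ can simply be deleted.

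The genuine gap is exactly at the step you flag as ``the main obstacle'': nothing in the proposal actually rules out a large matching. The potential-decrement iteration is not specified --- you do not say what the potential is, why it decreases by a definite amount, or why termination contradicts the matching --- and the self-similarity you invoke does not obviously close the loop. Passing to $U=\{a_i\}$, $V=\{b_i\}$ degrades the sumset parameter from $K$ to roughly $K/\varepsilon$, so a naive recursion loses quantitatively at every step; and even granting the conclusion for $(U,V)$, the diagonal sums $a_i+b_i\notin S$ need not be distinct --- each light value can absorb up to $\theta N$ matching edges, so one is only guaranteed about $\varepsilon/\theta$ distinct bad sums, a constant, which contradicts nothing. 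The sharp additive error $\varepsilon N$ (rather than a multiplicative loss) is precisely what makes this hard: Shao's actual proof does not proceed by such an elementary covering/decrement argument but uses substantially heavier machinery (Croot--Sisask-type almost periodicity), and there is no indication that Kneser plus Pl\"unnecke--Ruzsa, as invoked here, can substitute for it. As it stands, the proposal is a correct reduction of the theorem to its hardest case, followed by a sketch that does not prove that case.
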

The second theorem is an easy corollary of a theorem of Grynkiewicz~\cite{grynkiewicz-2}.
\begin{theorem}\label{Freiman_Zp}
Given $\beta, \gamma >0$ there is an $\varepsilon>0$ such that the following holds.
  Let $A$ and $B$ be subsets of $\mathbb{Z}_p$. Suppose that
$2\leq \min(|A|,|B|)\text{ and }|A|+|B| \leq (1-\beta)p$
and $|A+B| \leq |A|+|B|-1 + \varepsilon \min(|A|,|B|)$.
Then there are arithmetic progressions $P$ and $Q$ with the same common difference that contain $A$ and $B$ and satisfy $|P \Delta A|\leq  \gamma \min(|A|,|B|)  \text{ and }  |Q \Delta B|\leq \gamma \min(|A|,|B|)$.
\end{theorem}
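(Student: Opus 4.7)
The plan is to invoke Grynkiewicz's Freiman-type theorem for $\mathbb{Z}_p$ as a black box and then translate its conclusion into the symmetric-difference language demanded here. Grynkiewicz's theorem says, roughly, that whenever $A, B \subset \mathbb{Z}_p$ satisfy $|A+B| \leq |A|+|B|-1+r$ with $r$ small relative to $\min(|A|,|B|)$ and $|A+B|$ bounded away from $p$, then there exist arithmetic progressions $P \supset A$ and $Q \supset B$ sharing a common difference $d$, of total length $|P|+|Q| \leq |A+B|+r+O(1)$. Our hypothesis $|A|+|B| \leq (1-\beta)p$ together with the near-tight sumset assumption forces $|A+B| \leq (1-\beta)p + \varepsilon \min(|A|,|B|)$, which stays bounded away from $p$ provided $\varepsilon$ is chosen smaller than, say, $\beta/2$, so Grynkiewicz applies with $r = \varepsilon\min(|A|,|B|)$.

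Feeding the hypothesis into the output, this yields
\[
\bigl(|P|-|A|\bigr) + \bigl(|Q|-|B|\bigr) \leq |A+B| + O(1) - (|A|+|B|) \leq \varepsilon \min(|A|,|B|) + O(1).
\]
Both summands on the left are non-negative, because $A \subset P$ and $B \subset Q$, so each is individually at most $\varepsilon \min(|A|,|B|) + O(1)$. Since $A \subset P$ and $B \subset Q$, these quantities coincide with $|P \Delta A|$ and $|Q \Delta B|$ respectively. Choosing $\varepsilon$ small enough in terms of $\gamma$, and disposing of the finitely many configurations where $\min(|A|,|B|)$ lies below some absolute constant by direct inspection (using the hypothesis $\min(|A|,|B|) \geq 2$), absorbs the additive $O(1)$ into $\gamma \min(|A|,|B|)$ and gives the required bound.

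The only substantive step is matching the hypotheses of Grynkiewicz's theorem to ours -- in particular ruling out the degenerate regime in which the containing progressions would be forced to be all of $\mathbb{Z}_p$, which is exactly why the condition $|A|+|B| \leq (1-\beta)p$ is imposed. Once this is taken care of, the rest is bookkeeping, which is presumably why the authors describe this as an easy corollary rather than a theorem in its own right.
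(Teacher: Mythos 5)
Your route is the same as the paper's: the paper gives no proof of this statement at all, simply declaring it an easy corollary of Grynkiewicz's Freiman-type ($3k-4$-type) theorem in $\mathbb{Z}_p$, and that citation-plus-bookkeeping is exactly what you carry out. The verification that the sumset stays bounded away from $p$ (using $|A|+|B|\leq(1-\beta)p$ and $\varepsilon\leq\beta/2$) is the right and only substantive hypothesis check, and the translation $|P\Delta A|=|P|-|A|$, $|Q\Delta B|=|Q|-|B|$ is fine since $A\subset P$, $B\subset Q$. (Minor arithmetic point: you drop the term $r=\varepsilon\min(|A|,|B|)$ when passing from $|P|+|Q|\leq|A+B|+r+O(1)$ to the displayed bound; keeping it just changes $\varepsilon$ to $2\varepsilon$, so choose $\varepsilon$ accordingly.)

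The one step that does not work as written is your disposal of the regime where $\min(|A|,|B|)$ is below an absolute constant $C$ ``by direct inspection of finitely many configurations.'' These are not finitely many configurations: only the smaller set is bounded, while the larger set and $p$ are unbounded, and when $\gamma\min(|A|,|B|)<1$ the required conclusion forces $P=A$ and $Q=B$, i.e.\ that $A$ and $B$ are themselves arithmetic progressions with a common difference --- a genuine structural statement, not something one inspects away. The fix is standard: either quote a version of Grynkiewicz's theorem in $\mathbb{Z}_p$ with no additive $O(1)$ loss (so that $|P|-|A|$ and $|Q|-|B|$ are at most $r$ and no case split is needed), or, in the small-$\min$ regime, choose $\varepsilon<1/C$ so that the hypothesis collapses to $|A+B|\leq|A|+|B|-1$; Cauchy--Davenport then gives equality, $|A+B|=|A|+|B|-1\leq(1-\beta)p\leq p-2$, and Vosper's critical-pair theorem (which applies since $|A|,|B|\geq 2$) yields that $A$ and $B$ are progressions with the same common difference, so one may take $P=A$, $Q=B$. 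With that substitution (and a check that the version of Grynkiewicz's theorem you invoke tolerates $|A|,|B|\geq 2$ rather than $\geq 3$, which the same Vosper argument also covers), your argument is complete and is the intended one.
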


The last theorem we need is a somewhat technical result from \cite{BLT}. It gives a strengthening of the result from \cite{BLT}
mentioned above about sums in $\mathbb{Z}_p$, when the sets $A$ and $B$ `relate nicely' to intervals.

\begin{theorem}\label{CD_technical}
For all $\beta>0$ there exists $\gamma>0$ such that for every $\alpha>0$ there is a value of $c$ for which the
following
holds. Let $A$ and $B$ be subsets of $\mathbb{Z}_p$ and let  $I= [p_l,  p_r]$ and $J= [q_l,  q_r]$
be intervals in $\mathbb{Z}_p$ satisfying $\alpha |J| \leq |I| \leq \alpha^{-1} |J|$,\  $|I|+|J|\leq (1-\beta)p$,
$\max(|A\Delta I|, |B\Delta J|) \leq \gamma \min (|I|, |J|)$\ and\  $\{q_l, q_r\} \subset B \subset J$.
Then there is a family $\mathcal{F}\subset B^{(c)}$,\ depending only on $I$, $J$ and $B$ (but not on $A$),\ such that
\[
\E_{B'\in \mathcal{F}}|A+B'|\geq |A|+|J|-1 \geq |A|+|B|-1.
\]
\end{theorem}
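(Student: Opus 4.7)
The plan is to exploit the fact that $|I|+|J|\le(1-\beta)p$, which lets us work in $\Z$ and treat $I,J$ as honest intervals, so $I+J$ is an interval of length $|I|+|J|-1$. Writing $N=|I|$, $M=|J|$ (WLOG $N\le M$), $A_0=I\setminus A$ and $A_1=A\setminus I$, the target $|A|+|J|-1$ equals $(N+M-1)+(|A_1|-|A_0|)$, and differs from $|I+J|$ by at most $\gamma N$. The idea is to design $\mathcal F$ so that for a typical $B'\in\mathcal F$ the losses in $I+J$ average to at most $|A_0|$ while the gains outside $I+J$ are at least $|A_1|$.

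My construction of $\mathcal F$ partitions $J$ into $c-2:=\lfloor M/(4\gamma N)\rfloor$ consecutive sub-intervals $J_1,\ldots,J_{c-2}$ of length $\approx 4\gamma N$; since $|J\setminus B|\le\gamma N\le|J_i|/4$, each $B_i:=B\cap J_i$ satisfies $|B_i|\ge|J_i|/2$. Let $\mathcal F$ consist of all sets $\{q_l,q_r\}\cup\{b_1,\ldots,b_{c-2}\}$ with $b_i\in B_i$, so that sampling from $\mathcal F$ uniformly is equivalent to choosing each $b_i$ independently and uniformly in $B_i$; clearly $\mathcal F$ depends only on $I,J,B$. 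The outside gain is deterministic: since $a\notin I$ forces at least one of $a+q_l,a+q_r$ to lie outside $I+J$ (using the $(1-\beta)p$ room to preclude wrap-around collisions), the assignment $a\mapsto a+q_l$ for $a<p_l$ and $a\mapsto a+q_r$ for $a>p_r$ injects $A_1$ into $(A+B')\setminus(I+J)$, guaranteeing the $|A_1|$ contribution.

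For the losses, I would estimate $\sum_{y\in I+J}\Prob[y\notin A+B']$ by splitting $I+J$ into the tails $T_l=[p_l+q_l,p_r+q_l]$, $T_r=[p_l+q_r,p_r+q_r]$ and the middle $[p_r+q_l+1,p_l+q_r-1]$. Inclusion of $q_l$ (resp.\ $q_r$) leaves candidate losses only in $L_l:=A_0+q_l\subset T_l$ (resp.\ $L_r:=A_0+q_r\subset T_r$), of total size $2|A_0|$. For middle $y$'s the window $y-I$ of length $N$ fully contains $\Theta(1/\gamma)$ chunks $J_j$, each providing an independent $b_j$ with $\Prob[y-b_j\in A]\ge\tfrac12$, giving $\Prob[y\notin A+B']\le 2^{-\Theta(1/\gamma)}$. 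A similar rescue by interior $b_i$'s applies to $y=a_0+q_l\in L_l$ whenever $a_0-p_l\gtrsim\gamma N$: several chunks $J_j\subset[q_l,a_0+q_l-p_l]$ then provide independent $b_j$'s with $y-b_j\in I$ landing in $A\setminus A_0$ with probability $\ge\tfrac12$.

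The main obstacle is sharpening the naive loss count of $2|A_0|$ down to $|A_0|$, since any $O(\gamma N)$ slack would swamp the target (as $|A|-|I|$ can itself be of order $\gamma N$). The saving comes from the observation that for each $a_0\in A_0$, the two candidate losses $a_0+q_l\in L_l$ and $a_0+q_r\in L_r$ cannot both survive the rescue simultaneously in expectation: interior $b_i$'s cover at least one with high probability. Combined with a precise charging of the remaining genuinely-lost near-boundary $y$'s (those with $a_0$ within $O(\gamma N)$ of $p_l$ or $p_r$) back to $A_0$, one should obtain $\E|A+B'|\ge(N+M-1)-|A_0|+|A_1|=|A|+|J|-1$. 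Executing this exact bookkeeping within the prescribed quantifier order ($\gamma$ chosen from $\beta$ alone, $c$ then chosen based on $\alpha$) is the technical crux, which is presumably why \cite{BLT} classify this result as technical.
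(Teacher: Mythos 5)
A preliminary remark: the paper does not prove Theorem~\ref{CD_technical} at all --- it is imported from \cite{BLT} and used as a black box --- so your sketch cannot be matched against an internal argument and must stand on its own. It does not, and the first genuine gap is the deterministic ``outside gain'' step. You claim that $a\notin I$ forces at least one of $a+q_l$, $a+q_r$ to lie outside $I+J$, the hypothesis $|I|+|J|\le(1-\beta)p$ precluding wrap-around. This is false: writing $a=p_l+t$ with $|I|\le t\le p-1$, one checks that $a+q_l\in I+J$ iff $t\le |I|+|J|-2$, while $a+q_r\in I+J$ iff $t\ge p-|J|+1$, so both happen simultaneously for a whole range of $t$ as soon as $p\le |I|+2|J|-3$ --- a regime perfectly compatible with $|I|+|J|\le(1-\beta)p$ (take $|I|\approx|J|\approx(1-\beta)p/2$ with $\beta<1/3$). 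The root cause is that your reduction to $\mathbb{Z}$ is legitimate only for $I$, $J$, $B$ and $A\cap I$: the hypothesis bounds the \emph{number} of points of $A\setminus I$, not their location, so they may sit anywhere in $\mathbb{Z}_p$. For the same reason the dichotomy ``$a<p_l$ or $a>p_r$'' is not well defined for such points, and the map is not injective modulo $p$: $a+q_l=a'+q_r$ whenever $a'=a-(|J|-1)$, and since the complement of $I$ is an interval of length $p-|I|\ge |J|$, both $a$ and $a'$ can lie outside $I$. So the guaranteed contribution of $|A_1|$ new elements outside $I+J$, from a family that is not allowed to see $A$, is not established; repairing it is not cosmetic, because for the ``wrapped'' $a$ the useful $b$'s form a sub-interval of $J$ whose position depends on $a$, and distinct elements of $A_1$ can share gain points.

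The loss side is also not closed, and here the difficulty is that the target inequality has no slack whatsoever. Your per-point bound $\Prob[y\notin A+B']\le 2^{-\Theta(1/\gamma)}$ for middle $y$ is a constant, so summed over the $\Theta(|J|)$ middle points it is of order $|J|\,2^{-\Theta(1/\gamma)}$, which already exceeds the entire permitted expected loss $|A_0|$ when, say, $A\supset I$ with a single point removed (and when $|A_0|=0$ the permitted loss is zero). One therefore needs loss probabilities sensitive to $|A_0|$, of the shape $(O(|A_0|/|I|))^{\Theta(1/\gamma)}$ per point, and must then still beat the factor $|J|\le\alpha^{-1}|I|$ with a $\gamma$ fixed before $\alpha$ --- exactly the quantifier issue you flag but do not resolve. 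Moreover, the step you yourself identify as the crux (reducing the tail losses from $2|A_0|$ to $|A_0|$ and charging the surviving near-boundary losses back to $A_0$) is asserted as something that ``should'' work with no mechanism given; that saving is precisely the substantive content of the theorem in \cite{BLT}. Finally, ``WLOG $|I|\le|J|$'' is not available, since the statement is asymmetric in $A$ and $B$ (the family must lie in $B$ and the target is $|A|+|J|-1$), so both cases require an argument. As it stands this is a plausible programme, not a proof.
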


\section{Proof of Theorem 2}

In this section we prove our main result on general abelian groups, Theorem~\ref{implication_shao1-prov}.

\vspace{5pt}
We start by giving a brief overview of the proof. Although this paper is self-contained, we mention that the
reader who is familiar with \cite{BLT} will see that this proof is similar in spirit to the proof of Theorem 10 in
that paper.

We will repeatedly apply Theorem~\ref{thm_shao} in order to construct a decreasing sequence
of $s + 1$ pairs of sets $(A, B) = (A_0, B_0), (A_1, B_1), \dots , (A_s, B_s)$,
satisfying $A_i \subset  A_{i-1}$ and $B_i
\subset  B_{i-1}$ and $|A_i| >
(1 -
\varepsilon / s)^i |A|$ and $|B_i| > (1 - \varepsilon / s)^i |B|$.
Having constructed $A_i$ and $B_i$ we divide the elements of  $A_i+B_i$ into the
set $P_i$ of `popular' ones (those hit at least $\alpha \min(|A_i|,|B_i|)$ times)  and
the set $U_i$ of unpopular ones. And we let $\Gamma$ be the pairs summing to
popular elements.

We first deal with the situation when $|\Gamma|$ is much smaller than $|A_i| |B_i|$ or
$|P_i|$ is much larger than $K \min(|A_i|,|B_i|)$. In both cases a simple
computational check shows that the pair $(A_i, B_i)$ has the desired
properties. If we are not in this situation then we can apply Thm~\ref{thm_shao} to $A_i, B_i$ and $\Gamma$ to construct the sets
$A_{i+1}$, $B_{i+1}$. These satisfy $|A_{i+1} +
B_{i+1}| < |P_i| + (\varepsilon / s) \min(|A_{i+1}|, |B_{i+1}|)$.

We then deal with the case when the process continues for at least $s$
steps. In this case we have  $|A_{i+1} + B_{i+1}| -  (\varepsilon / s)
\min(|A_{i+1}|, |B_ {i+1}|) < |P_i| < |A_i+B_i|$, where the second inequality is
clear and the first is by Theorem~\ref{thm_shao}.

As $|P_0| < 10 K \min (|A_0|, |B_0|)$ and $|A_s + B_s| > |B_s| > |B_0|/2$, we
deduce that there exists $i$ such that $|P_i|$ is about 
then easy to check that the pair of sets $(A_i, B_i)$ has the desired
property. Indeed, if $c$ is large enough (depending on $\alpha$),
then $|A'+B'|$ is about $|P_i|$ as each point in $P_i$ is hit with high probability. So
we conclude that $|A'+B'|$ is about $|A_i+B_i|$.

\vspace{5pt}
We now turn to the proof itself.

\begin{proof}[Proof of Theorem \ref{implication_shao1-prov}]
Fix $\varepsilon>0$  and $K>0$, where we assume that $\varepsilon$ is sufficiently small and
$K$ is sufficiently large.
Pick $s=\lfloor \frac{50K}{\varepsilon} \rfloor$. Let
$\delta$ be given by Theorem~\ref{thm_shao} with parameters $\frac{\varepsilon}{s}$ and $10K$.
Also pick  $\alpha = \delta/16 K$. Finally, pick $c\geq \max(2^{10}K/ \delta, 2^{10}  |\log(\epsilon)|/ \alpha)$. We may assume
that $|A| \geq |B|$.

We shall examine a process in which we repeatedly apply Theorem \ref{thm_shao} in order to construct a
decreasing sequence of $s+1$ pairs of sets $(A,B)=(A_0,B_0),(A_1,B_1), \hdots ,(A_s,B_s),$
satisfying $A_i \subset A_{i-1} \text{ and } B_i \subset B_{i-1}$ and
$|A_i|\geq (1-\varepsilon/s)^i |A|$ and  $|B_i|\geq (1-\varepsilon/s)^i |B| $.
Fix $i<s$ and assume that the pair of sets $(A_i,B_i)$ has already been
constructed. We shall either stop the process at step $i$ or construct the pair of
sets $(A_{i+1},B_{i+1})$.

Let $A_i+B_i=C_i^+\sqcup C_i^-$ be the partition into `popular'  and `unpopular' elements  given by
$ C_i^+=\{c \in A_i+B_i: \ |(c-A_i)\cap B_i| \geq \alpha |B_i|\}$, and
$C_i^-=\{c \in A_i+B_i: \ |(c-A_i)\cap B_i| < \alpha |B_i|\}$.
Also, let the partition $A_i \times B_i=\Gamma_i\sqcup \Gamma_i^c$ be
given by $\Gamma_i=\{(a,b) \in A_i \times B_i: \ a+b \in C_i^+  \} \subset A_i \times B_i$, and $\Gamma_i^c=\{(a,b) \in A_i
\times B_i : \ a+b \in C_i^-  \} \subset A_i \times B_i$,
so that $A_i+_{\Gamma_i}B_i=C_i^+$  and  $A_i+_{\Gamma^c_i}B_i=C_i^-$.
Finally, for
each $x \in A_i+B_i$ set
$$
A_i^x=(x-B_i)\cap A_i \text{ and } B_i^x=(x-A_i)\cap B_i \text{ such that } A_i^x=x-B_i^x,$$
$$
r_i(x)=|A_i^x|=|B_i^x|=|\{(a,b)\in A_i \times B_i: \
x=a+b\}|,
$$
so that $ \sum_x r_i(x)=|A_i||B_i|.$
We stop this process `early', at step $i$, if $$|\Gamma_i|<(1-\delta)|A_i||B_i| \text{ or } |A_i+_{\Gamma_i}B_i|> 10K \min(|A_i|,|B_i|).$$
Otherwise, we apply Theorem~\ref{thm_shao} with parameters
$\varepsilon/s, 10K$ to the pair of sets $(A_i,B_i)$.  Thus we produce a pair of sets $(A_{i+1},B_{i+1})$, satisfying
$A_{i+1} \subset A_i$,  $B_{i+1} \subset B_i$, \
$|A_{i+1}| \geq (1-\varepsilon/s)|A_i|$, \  $|B_{i+1}|\geq (1-\varepsilon/s)|B_i|$
and $|A_{i+1}+B_{i+1}| \leq |A_i+_{\Gamma_i}B_i|+\frac{\varepsilon}{s} \min(|A_i|,|B_i|)$.
We shall analyse separately the cases in which the process continues until the end and in
which the process stops before that.

Before we begin, we need one easy estimate. Suppose the process continues until step $j$. If we
choose elements $a_1, \hdots, a_{n_A}$ and $b_1, \hdots, b_{n_B}$ uniformly at random
from $A_j$ and $B_j$, and we write $A'_j=\{a_1, \hdots, a_{n_A}\}$ and $B'_j=\{b_1, \hdots, b_{n_B}\}$, then we have
\begin{equation}\label{unification1}
\begin{split}
    {\mathbb E}|A_j'+B_j'|
    &= \sum_x \Prob(x \in  A_j'+B_j')\\
    &\geq \sum_{x} \sum_{\substack{ X\subset A_j^x, Y\subset B_j^x \\ X=x-Y \\ |X|=|Y|> \frac{n_Br_j(x)}{2|B_j|} }}
    \mathbb{P} (B_j' \cap B_j^x = Y \text{ and } A_j' \cap X \neq \emptyset)\\
    &= \sum_{x} \sum_{\substack{ X\subset A_j^x, Y\subset B_j^x \\ X=x-Y \\ |X|=|Y|> \frac{n_Br_j(x)}{2|B_j|} }}
    \mathbb{P} (B_j' \cap B_j^x= Y )  \mathbb{P}(A_j' \cap X \neq \emptyset)\\
    &= \sum_{x} \Prob(|B_j'\cap B_j^x| >  \frac{n_Br_j(x)}{2|B_j|}) \min_{\substack{X \subset A_j^x \\ |X| >
        \frac{n_Br_j(x)}{2|B_j|}}}\Prob(|A_j' \cap X|  >  0)\\
      &\geq  \sum_{x} (1-\exp(- \frac{n_Br_j(x)}{16|B_j|} )) (1-\exp(- \max(\frac{n_A}{4|A_j|},\frac{n_An_Br_j(x)}{8|A_j||B_j|}))).
\end{split}
\end{equation}
Here, the last inequality follows from Chernoff's inequality (see for example Corollary 1.9 in \cite{taobook}) and the fact that $|X| > \frac{n_Br_j(x)}{2|B_j|} $ is equivalent to $|X| \geq \max(1, \frac{n_Br_j(x)}{2|B_j|})$.


\vspace{7pt}
{\bf Claim A.}
{\em
Suppose the process
stops early, say at step $j<s$. Then the pair of sets $(A_j,B_j)$ has the desired properties.
}

\begin{proof}
\vspace{7pt}
\noindent
\textbf{Case 1:} Consider first the
case when $|C^+_j|=|A_j+_{\Gamma_j}B_j|> 10K \min(|A_j|,|B_j|).$ For $x\in C_j^+$,
by construction we have that $r_j(x)=|(x-A_j)\cap B_j|\geq \alpha
|B_j|.$ If we choose elements $a_1, \hdots, a_{n_A}$ and $b_1, \hdots, b_{n_B}$ uniformly at random
from $A_j$ and $B_j$, and we write $A'_j=\{a_1, \hdots, a_{n_A}\}$ and $B'_j=\{b_1, \hdots, b_{n_B}\}$, then we have
\begin{eqnarray*}
{\mathbb E}|A_j'+B_j'|
    &\geq&  \sum_{x} (1-\exp(-\frac{n_Br_j(x)}{16|B_j|} )) (1-\exp(- \max(\frac{n_A}{4|A_j|},\frac{n_An_Br_j(x)}{8|A_j||B_j|}))) \\
    &\geq& \sum_{x}(1-\exp(- \frac{cr_j(x)}{16|B_j|}))^2
      \geq \sum_{x \in C^+_j} (1-\exp(- \frac{cr_j(x)}{16|B_j|}))^2\\
      &\geq& \sum_{x \in C^+_j}(1- \exp(-2^{-4} \alpha c))^2
      \geq |C_j^+|/2
    \geq 5K \min(|A_j|,|B_j|).
\end{eqnarray*}
Here the first inequality follows from \eqref{unification1}; the second from the hypothesis $n_An_B \geq c|A|\geq c|A_j|$
which, in particular, gives $n_B \geq c|B_j|$; the fourth from the construction as $r_j(x) \geq \alpha |B_j|$
for $x \in C_j^+$; the fifth from the hypothesis $c \geq 2^{10}/ \alpha$; and the last inequality follows from
the assumption on the size of $C^+_j$.

\vspace{7pt}
\noindent
\textbf{Case 2:} Consider now the case when $|\Gamma_j|<(1-\delta)|A_j||B_j|.$ By construction,
$\sum_{x\in C_j^-}r_j(x) \geq \delta|A_j||B_j|.$ Moreover, for
$x\in C_j^-$ we have $r_j(x)=|(x-A_j)\cap B_j|\leq \alpha
|B_j|.$
If we choose elements $a_1, \hdots, a_{n_A}$ and $b_1, \hdots, b_{n_B}$ uniformly at random
from $A_j$ and $B_j$, and we write $A'_j=\{a_1, \hdots, a_{n_A}\}$ and $B'_j=\{b_1, \hdots, b_{n_B}\}$, then we have the following sequence of inequalities. To make the formulae less cluttered, we define $D_j^-=\{x: n_B r_j(x)\le 2|B_j|\}$ and $D_j^+=\{x: n_B r_j(x)> 2|B_j|\}$.

\begin{eqnarray*}
    {\mathbb E}|A_j'+B_j'|\hspace{-8pt}
    &\geq& \hspace{-8pt} \sum_{x} (1-\exp(- \frac{n_Br_j(x)}{16|B_j|} )) (1-\exp(- \max(\frac{n_A}{4|A_j|},\frac{n_An_Br_j(x)}{8|A_j||B_j|}))) \\
    &\geq& \hspace{-12pt} \sum_{x \in D_j^-} (1-\exp(- \frac{n_Br_j(x)}{16|B_j|} ))
    (1-\exp(- \frac{n_A}{4|A_j|})) + \sum_{x\in D_j^+} 2^{-1}(1-\exp(- \frac{n_An_Br_j(x)}{8|A_j||B_j|}))\\
    &\geq& \hspace{-12pt} \sum_{x\in D_j^-} (1-\exp(- \frac{n_Br_j(x)}{16|B_j|} ))
    (1-\exp(- \frac{n_A}{4|A_j|})) + \sum_{x\in D_j^+} 2^{-1}(1-\exp(- \frac{8Kr_j(x)}{\delta|B_j|}))\\
    &\geq& \hspace{-19pt} \sum_{x \in C_j^-\cap D_j^-} \frac{n_Br_j(x)}{32|B_j|} \frac{n_A}{8|A_j|}
        + \sum_{x \in C_j^- \cap D_j^+} \frac{Kr_j(x)}{\delta |B_j|}
    \geq \sum_{x \in C_j^-} \frac{K r_j(x)}{\delta |B_j|} \geq K|A_j|.
\end{eqnarray*}
Here, the first inequality follows from \eqref{unification1}, the second inequality follows by splitting
into two cases according to how $\frac{n_B r(x)}{2|B_j|}$  compares to 1, the third inequality follows
from the hypothesis $n_An_B \geq c|A| \geq c|A_j|$ and $c \geq 2^{10}K/\delta$, the fourth inequality
follows from the two facts that  $1-\exp(-t) \geq t/2$ for $0 \leq t \leq 1/2 $ and $\frac{8Kr_j(x)}{\delta |B_j|}
\leq \frac{8K \alpha}{\delta} <1/2 $ for $x \in C_j^-$, the fifth inequality follows again from the
hypothesis $n_An_B \geq c |A| \geq c |A_j| $ and $c\geq  2^{10} K/ \delta $, and the last inequality
follows from the original assumption that $\sum_{x \in C_j^-} r_j(x) \geq \delta |A_j||B_j|$.

We conclude the pair of sets $(|A_j|,|B_j|)$ has the desired properties, so Claim A is proved.
\end{proof}

We now turn to the case when the process does not stop early.

\vspace{7pt}
{\bf Claim B.} {\em
Suppose that the process continues until the terminal step $s$.
Then there is an index $j\leq s$ such that the  pair of sets $(A_j,B_j)$ has the desired
properties.}
\begin{proof}
Note that $|A_{1}+B_{1}| \leq |A_0+_{\Gamma_0}B_0|+\frac{\varepsilon}{s} \min(|A_0|,|B_0|) \leq 11K \min(|A_0|,|B_0|).$
Moreover, $11K \min(|A|,|B|)= 11K \min(|A_0|,|B_0|)\geq |A_{1}+B_{1}| \geq \hdots \geq |A_{s}+B_{s}| \geq 0.$ Therefore
$|A_{j+1}+B_{j+1}|\geq |A_{j}+B_{j}|-\frac{11K}{s-1}\min(|A|,|B|)$ for some index $j$,
$1 \leq j \leq s$.  We shall
show that the pair  $(A_j,B_j)$ has the desired properties.
Indeed, by construction,
\[
|A_{j}+_{\Gamma_j}B_{j}|+\frac{\varepsilon}{s}
\min(|A|,|B|) \geq |A_{j}+_{\Gamma_j}B_{j}|+\frac{\varepsilon}{s}
\min(|A_j|,|B_j|) \geq |A_{j+1}+B_{j+1}|.
\]
It follows that
\[
  |A_j+_{\Gamma_j}B_j|\geq |A_{j}+B_{j}|- \big(\frac{11K}{s-1}+\frac{\varepsilon}{s} \big)\min(|A|,|B|)
  \geq \big(1- \frac{20K}{s} \big)|A_{j}+B_{j}|.
\]
If we choose elements $a_1, \hdots, a_{n_A}$ and $b_1, \hdots, b_{n_B}$ uniformly at random
from $A_j$ and $B_j$, and we write $A'_j=\{a_1, \hdots, a_{n_A}\}$ and $B'_j=\{b_1, \hdots, b_{n_B}\}$, then we have
\begin{eqnarray*}
    {\mathbb E}|A_j'+B_j'|
    &\geq&  \sum_{x} (1-\exp(-  \frac{n_Br_j(x)}{16|B_j|} )) (1-\exp(- \max(\frac{n_A}{4|A_j|},\frac{n_An_Br_j(x)}{8|A_j||B_j|})))\\
    &\geq& \sum_{x}(1-\exp(- \frac{cr_j(x)}{16|B_j|}))^2
      \geq \sum_{x \in C^+_j} (1-\exp(- \frac{cr_j(x)}{16|B_j|}))^2\\
      &\geq& \sum_{x \in C^+_j}(1- \exp(-2^{-4} \alpha c))^2
      \geq (1- \exp(-2^{-4} \alpha c))^2 |A_j+_{\Gamma_j}B_j|\\
      &\geq& (1- \exp(-2^{-4} \alpha c))^2 (1-\frac{20 K}{s}) |A_j+B_j|
       \geq (1-\epsilon) |A_j+B_j|.
\end{eqnarray*}
Here the first inequality follows from \eqref{unification1}; the second from the hypothesis
$n_An_B \geq c|A|\geq c|A_j|$ which, in particular, gives $n_B \geq c|B_j|$; the fourth inequality
holds by the construction, as $r_j(x) \geq \alpha |B_j|$ for $x \in C_j^+$; the fifth inequality follows from the
hypothesis $c \geq 2^{10}/ \alpha |\log(\epsilon)|$; and the last inequality follows from the assumption on the size of $C^+_j$.

Thus the pair $(|A_j|,|B_j|)$ has the desired properties, so Claim B is proved.
\end{proof}

\vspace{5pt}
This concludes the proof of Theorem~\ref{implication_shao1-prov}: whether the process stops early
or does not, the  pair of sets $(A_j,B_j)$ has the desired
properties.
\end{proof}

When we come to proving Theorem~\ref{all_thm1}, we shall need the following consequence of this.

\begin{theorem}\label{all_thm2}
  For all $\beta, \gamma>0$ there exists $\epsilon>0$ such that for all $\alpha>0$ there is a value of $c$
  for which the following holds.  Let $A$ and $B
  $ be subsets of $\mathbb{Z}_p$ and let $1 \leq c_1 \leq |A|$ and $1 \leq c_2 \leq |B|$ be integers such
  that $c_1c_2 \geq c\max(|A|,|B|)$. Suppose that
\begin{equation}\label{all_eq2}
    2 \leq \min(|A|,|B|),  \alpha |B| \leq |A| \leq \alpha^{-1} |B| \text{ and } |A|+|B| \leq (1-\beta)p
\end{equation}
and
\begin{equation}\label{all_eq3}
    \E_{A' \in A^{(c_1)},\  B' \in B^{(c_2)}} |A'+B'| \leq |A|+|B|-1 +\eps \min(|A|,|B|).
\end{equation}
Then there exist arithmetic progressions $P$ and $Q$ with the same common difference and
\begin{equation*}
    \max(|A\Delta P|, |B \Delta Q|) \leq \gamma \min(|A|,|B|).
\end{equation*}
\end{theorem}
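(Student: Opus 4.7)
The plan is to combine Theorem~\ref{implication_shao1-prov}, which converts the expectation bound (\ref{all_eq3}) into an honest upper bound on $|A^*+B^*|$ for large subsets $A^*\subset A$, $B^*\subset B$, with Theorem~\ref{Freiman_Zp}, which will force $A^*,B^*$ (and hence $A,B$) to sit close to two arithmetic progressions with a common difference. For constants, set $\delta=\delta(\beta,\gamma/2)$ to be the $\varepsilon$ produced by Theorem~\ref{Freiman_Zp} with parameters $\beta$ and $\gamma/2$, and take the desired tolerance to be $\eps=\delta/4$; this depends only on $\beta$ and $\gamma$. Given $\alpha$, set $K=10/\alpha$, pick $\varepsilon'=\varepsilon'(\alpha,\beta,\gamma)$ sufficiently small (it suffices to take $\varepsilon'\le\min(\gamma\alpha/2,\,\delta\alpha/16)$), and let $c$ be large enough for Theorem~\ref{implication_shao1-prov} to apply with parameters $K,\varepsilon'$ and large enough that $\min(|A|,|B|)\ge c$ comfortably exceeds any absolute thresholds we need.

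First, apply Theorem~\ref{implication_shao1-prov} to $A,B$ with sizes $c_1,c_2$. We obtain $A^*\subset A$, $B^*\subset B$ with $|A^*|\ge(1-\varepsilon')|A|$, $|B^*|\ge(1-\varepsilon')|B|$, and
\[
\E_{A'\in A^{(c_1)},\,B'\in B^{(c_2)}}|A'+B'|\ge\min\bigl((1-\varepsilon')|A^*+B^*|,\,K|A^*|,\,K|B^*|\bigr).
\]
By (\ref{all_eq2}) we have $|A^*|,|B^*|\ge(1-\varepsilon')\alpha\max(|A|,|B|)$, while by (\ref{all_eq3}) the left-hand side is at most $|A|+|B|-1+\eps\min(|A|,|B|)\le 3\max(|A|,|B|)$; the choice $K=10/\alpha$ therefore rules out the two terms $K|A^*|,K|B^*|$ in the minimum. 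Hence $(1-\varepsilon')|A^*+B^*|\le|A|+|B|-1+\eps\min(|A|,|B|)$, and using $|A^*|+|B^*|\ge(1-\varepsilon')(|A|+|B|)$ together with $|A|+|B|\le(1+\alpha^{-1})\min(|A|,|B|)$ a short rearrangement gives
\[
|A^*+B^*|\le|A^*|+|B^*|-1+\frac{2\varepsilon'(1+\alpha^{-1})+\eps}{(1-\varepsilon')^2}\min(|A^*|,|B^*|),
\]
where by our choice of $\varepsilon'$ and $\eps$ the coefficient is at most $\delta$.

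Second, apply Theorem~\ref{Freiman_Zp} to $A^*,B^*$ with parameters $\beta$ and $\gamma/2$; the hypotheses $\min(|A^*|,|B^*|)\ge 2$ and $|A^*|+|B^*|\le(1-\beta)p$ follow from (\ref{all_eq2}) and the choice of $c$. This yields arithmetic progressions $P\supset A^*$ and $Q\supset B^*$ of common difference with $|P\Delta A^*|,\,|Q\Delta B^*|\le(\gamma/2)\min(|A^*|,|B^*|)$. For the transfer back to $A,B$: since $A^*\subset P$ we have $|A\setminus P|\le|A\setminus A^*|\le\varepsilon'|A|$, while $|P\setminus A|\le|P\setminus A^*|\le(\gamma/2)\min(|A^*|,|B^*|)$. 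Combining and using $|A|\le\alpha^{-1}\min(|A|,|B|)$ gives $|P\Delta A|\le(\gamma/2+\varepsilon'\alpha^{-1})\min(|A|,|B|)\le\gamma\min(|A|,|B|)$ by the condition $\varepsilon'\le\gamma\alpha/2$, and similarly $|Q\Delta B|\le\gamma\min(|A|,|B|)$.

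The main subtlety is the role of $\alpha$. The outer tolerance $\eps$ enters only additively into the sumset defect $|A^*+B^*|-|A^*|-|B^*|+1$ and so can be chosen purely in terms of $\beta,\gamma$ (through the Freiman threshold $\delta$); by contrast the internal loss $\varepsilon'$ from Theorem~\ref{implication_shao1-prov} and the transfer step each pick up factors of $\alpha^{-1}$ (from $|A|\le\alpha^{-1}\min(|A|,|B|)$ and from comparing $|A|+|B|$ to $\min(|A|,|B|)$), which forces $\varepsilon'$, and hence the final $c$, to depend on $\alpha$. A secondary but routine point is verifying that the expectation statement of Theorem~\ref{implication_shao1-prov}, proved for independent sampling, controls the expectation over $A^{(c_1)}\times B^{(c_2)}$ used here; the two differ only by a negligible amount for the values of $c_1,c_2$ available.
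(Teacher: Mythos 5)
Your proposal is correct and follows essentially the same route as the paper's own proof: apply Theorem~\ref{implication_shao1-prov} with $K$ of order $\alpha^{-1}$ and an internal tolerance small compared with $\alpha$ times the Freiman threshold, use the ratio condition in \eqref{all_eq2} to rule out the $K|A^*|,K|B^*|$ terms, deduce that $|A^*+B^*|$ is within $\delta\min(|A^*|,|B^*|)$ of minimal, invoke Theorem~\ref{Freiman_Zp}, and transfer the progressions from $A^*,B^*$ back to $A,B$; the quantifier order ($\eps$ independent of $\alpha$, $c$ not) is handled exactly as in the paper. The only differences are in the explicit constants, and the sampling-model point you flag at the end is likewise treated as routine in the paper.
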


\begin{proof}
We start by fixing some parameters. Fix $2^{-10}>\beta>\gamma>\alpha>0$. Let $\varepsilon_1$ be the output of Theorem~\ref{Freiman_Zp} with
  input $\beta,2^{-11}\gamma$. Let
  $\varepsilon=\min(2^{-2}\varepsilon_1,2^{-6}\gamma)$. Let $\mu=2^{-12}\min(\alpha \varepsilon, \alpha \gamma)$.
  Finally, let $c$ be the output of Theorem~\ref{implication_shao1-prov} with input $(4/\alpha,\mu)$.

By Theorem~\ref{implication_shao1-prov}, there are subsets $A^*\subset A$ and $B^*\subset B$ with
\begin{equation}\label{eq22.1*}
    |A^*|\geq \lceil (1-\mu)|A|\rceil  \text{ and } |B^*|\geq \lceil (1-\mu)|B|\rceil
\end{equation}
such that
\begin{equation}\label{eq22.2*}
  \E_{A' \in A^{(c_1)}, \ B'\in B^{(c_2)}}| A'+B'| \geq \min \bigg((1-\mu)|A^*+B^*|\text{, } \frac{4}{\alpha}|A|\text{, }
  \frac{4}{\alpha}|B|\bigg).
\end{equation}
By \eqref{all_eq2} we have
\begin{equation}\label{eq22.25*}
\min\bigg(\frac{4}{\alpha}|A|,\frac{4}{\alpha}|B|\bigg) \geq 4\max(|A|,|B|) >
    |A|+|B|-1+\varepsilon\min(|A|,|B|),
\end{equation}
and by \eqref{all_eq2} and \eqref{eq22.1*} we find that
\begin{equation}\label{eq22.27*}
  2\leq \min\big(|A^*|,|B^*|\big), \ \ \ \frac{\alpha}{2}
  |B^*| \leq |A^*| \leq \frac{2}{\alpha}|B^*| \ \ \ {\rm and} \ \ \  |A^*|+|B^*| \leq (1-\beta)p.
\end{equation}
Hence \eqref{all_eq3}, \eqref{eq22.2*} and \eqref{eq22.25*} imply
\begin{equation*}
    |A|+|B|-1+\varepsilon\min(|A|,|B|) \geq (1-\mu)|A^*+B^*|.
\end{equation*}
Combining this with \eqref{eq22.1*} and \eqref{eq22.27*}, we find that
\begin{equation}\label{eq22.3*}
    \begin{split}
        |A^*+B^*| &\leq (1-\mu)^{-1}(|A|+|B|-1+\varepsilon\min(|A|,|B|)) \\
        &\leq (1-\mu)^{-2}|A^*|+(1-\mu)^{-2}|B^*|-1+2\varepsilon \min(|A^*|, |B^*|)\\
        &\leq |A^*|+|B^*|-1+8\mu\max(|A^*|,|B^*|)+ 2\varepsilon\min(|A^*|,|B^*|)\\
        &\leq |A^*|+|B^*|-1+4\varepsilon\min(|A^*|,|B^*|).
        \end{split}
\end{equation}
Recalling Theorem~\ref{Freiman_Zp} and \eqref{eq22.27*}, we see that there
exist arithmetic progressions $P$ and $Q$ with
the same common difference such that
\begin{equation*}
    |A^*\Delta P| \leq \frac{\gamma}{2^{11}}\min(|A^*|,|B^*|) \text{ and } |B^*\Delta Q| \leq \frac{\gamma}{2^{11}}\min(|A^*|,|B^*|)
\end{equation*}

The conclusion now follows from this and \eqref{eq22.1*}:
\begin{eqnarray*}
    |A\Delta P|&\leq& |A^*\Delta P|+|A\Delta A^*| \leq \frac{\gamma}{2^{11}}\min(|A^*|,|B^*|)+\mu|A|\\
    &\leq& \frac{\gamma}{2^{11}}\min(|A|,|B|)+\mu|A|
    \leq \frac{\gamma}{2^{10}}\min(|A|,|B|).
\end{eqnarray*}
and
\begin{eqnarray*}
    |B\Delta Q|&\leq& |B^*\Delta Q|+|B\Delta B^*|
    \leq \frac{\gamma}{2^{11}}\min(|A^*|,|B^*|)+\mu|B|\\
       &\leq& \frac{\gamma}{2^{11}}\min(|A|,|B|)+\mu|B|
    \leq \frac{\gamma}{2^{10}}\min(|A|,|B|).
\end{eqnarray*}

This completes the proof of Theorem~\ref{all_thm2}.
\end{proof}

\section{Proof of Theorem 1}

We start with a sketch of how the proof of Theorem~\ref{all_thm1} will proceed.

\vspace{5pt}
By Theorem~\ref{all_thm2}, we may assume that $A$ and $B$ are close to
intervals $I$ and $J$. Some delicate analysis around the endpoints of $I$ and
$J$, where we may slightly alter these intervals, will allow us to reduce to
the case where $A$ and $B$ are actually contained in $I$ and $J$. Thus there is
no `wraparound', and so we may as well be working in ${\mathbb Z}$ instead of ${\mathbb Z}_p$.

Assume for simplicity that $I = J$ has size $n$, that $A$ and $B$ are
contained in $I$ and $J$ and have size at least say $(1- 1/1000) n$, and that
$c_1 = c_2= c \sqrt{n}$ for some large constant $c$.
Fix $d$ to be about $\sqrt n$, and as usual write $X^y$ for $X \cap (y \mod d)$. Assume $A^0$ is the
largest of the fibres $A^y$. Then clearly $2|A^0| > |B^y|$ for all $y$. By
Theorem~\ref{CD_technical}, if $B'^y$ is a bounded set of $r$ random
points in $B^y$, chosen according to some distribution independent of $A$, we
have that $|A^0 + B'^y| \geq |A^0| + |B^y|-1$.

If we let $B'= \cup_y B'^y$, we have  $|A^0 + B'| \geq |A^0| |\pi(B)| +|B| -
| \pi(B)|$  and $A^0 + B' \subset \pi^{-1} \pi(B)$.

Now pick a random fibre $y \mod d$ and let $A' = A^0 \cup A^y$.
We want to show that
for each fibre $z \mod d$ we have ${\mathbb E} |(A'^y+B')^z| \geq n/d$.

Indeed, once we have shown this, then combining the two inequalities and summing
over all $z$ not in  $\pi(B)$ gives
${\mathbb E} |A'+B'| \geq |A^0| |\pi(B)| +|B| - |\pi(B)| + (d- |\pi(B)|) n/d \geq n/d
| \pi(B)| +|B| - |\pi(B)| + (d- |\pi(B)|) n/d \geq n+|B| - d$.
So fix sets $A'$ and $B'$ which satisfy this bound. To finish from
here we just note that by adding $d$ extra points in $A'$ and $d$ extra points in
$B'$ we can guarantee $|A'+B'| \geq |A|+|B|-1$. We also note $|A'| = |B'| =
O(\sqrt n)$.

To show ${\mathbb E} |(A'^y+B')^z| \geq n/d$, we proceed as follows. Note that the
proportion of fibres $y \mod d$ such that $A^y$ and $B^y$ have size at least
$9n/10d$ is at least $9/10$.  Therefore, for a fixed fibre $z \mod d$, with
probability at least $8/10$ both $A^y$ and $B^{z-y}$ have size at least $9n/10d$.
Conditioned on this event, it follows that $2|A^y| \geq |B^{z-y}|$. Using
Theorem~\ref{CD_technical} again, we obtain $|A^y+B'^{z-y}|
\geq |A^y|+|B^{z-y}|-1 \geq 18n/10d - 1$.
Hence
\[
{\mathbb E} |(A'^y+B')^z| \geq (8/10) (18n/10d - 1) \geq n/d.
\]

\vspace{5pt}
We now start to work towards the proof of Theorem~\ref{all_thm1}. We collect together in advance
some results that we shall need. The first of these results will be applied when we already know that our
sets are close to intervals.

\begin{theorem}\label{all_thm3}
There exists $\gamma>0$ such that for all $\alpha>0$ there exists $c$ for which the following holds.  Let $X$ and $Y$ be
subsets of two intervals $I$ and $J$ of $\mathbb{Z}_p$, and let $1 \leq c_1 \leq |X|$ and $1 \leq c_2 \leq |Y|$ be
integers such that $c_1c_2 \geq c\max(|X|,|Y|)$. Suppose that $\alpha |J| \leq |I| \leq \alpha^{-1}|J|$, \  $|I| +|J| \leq p$
and $\max(|I\setminus X|, |J \setminus Y| ) \leq \gamma \min(|I|, |J|)$. Then there exist
$X' \in X^{(c_1)}$ and $Y' \in Y^{(c_2)}$ such that $|X'+Y'| \geq |X|+|Y|-1$.
\end{theorem}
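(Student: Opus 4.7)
The plan is to carry out the fibrewise argument sketched at the start of Section~4: apply Theorem~\ref{CD_technical} to each fibre of $X$ and $Y$ modulo a well-chosen integer $d$, and combine the resulting random small subsets across fibres.

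First I would reduce to working in $\mathbb{Z}$. Since $|I|+|J|\le p$, a translation of $\mathbb{Z}_p$ places $I$ and $J$ in a single arc without wraparound, so all sumsets in question coincide with the corresponding sumsets in $\mathbb{Z}$. Write $n=\max(|X|,|Y|)$, so that $|I|,|J|=(1+O(\gamma))n$. Next I would choose a modulus $d$ in the range $2n/c_1\le d\le c_2/(2c_0)$, where $c_0$ is the constant returned by Theorem~\ref{CD_technical} applied at the fibre level; this range is non-empty once $c$ is taken large, since $c_1c_2\ge cn$. For each $y\in\mathbb{Z}_d$ write $X^y=X\cap(y+d\mathbb{Z})$, $Y^y=Y\cap(y+d\mathbb{Z})$, and let $P_y=J\cap(y+d\mathbb{Z})$, $Q_y=I\cap(y+d\mathbb{Z})$ be the ambient arithmetic progressions of common difference $d$. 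After dividing by $d$ to turn each AP into an interval, and after a slight endpoint adjustment to force the endpoints of $P_y$ into $Y^y$ (adding at most two fixed points per fibre), Theorem~\ref{CD_technical} supplies, for each $y\in\pi(Y)$, a family $\mathcal{F}_y\subset Y^{y,(c_0)}$ depending only on $Y^y$ such that
\[
\E_{B_y\in\mathcal{F}_y}|A+B_y|\ge|A|+|P_y|-1
\]
for every $A\subset Q_y$ that is $\gamma$-close to $Q_y$.

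The construction is then: pick $y_0\in\mathbb{Z}_d$ maximising $|X^{y_0}|$ (so $|X^{y_0}|\ge|X|/d$), sample $B_y\in\mathcal{F}_y$ independently for each $y\in\pi(Y)$, pick an independent uniform $y_1\in\mathbb{Z}_d$, and set
\[
X'=X^{y_0}\cup X^{y_1}\cup C_X,\qquad Y'=\Bigl(\bigcup_{y\in\pi(Y)}B_y\Bigr)\cup C_Y,
\]
where $C_X\subset X$ and $C_Y\subset Y$ are deterministic corrective sets of size $\le d$. The totals $|X'|\le 2|X|/d+d$ and $|Y'|\le dc_0+d$ both fit within the budgets $c_1,c_2$. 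To estimate $\E|X'+Y'|$, I would split by sum-fibre $z\bmod d$. For $z\in y_0+\pi(Y)$, the sums $X^{y_0}+B_{z-y_0}$ lie in distinct sum-fibres, giving
\[
|X^{y_0}+Y'|\ge|\pi(Y)|\cdot|X^{y_0}|+|J|-|\pi(Y)|.
\]
For $z\notin y_0+\pi(Y)$, with probability $\ge 1-O(\gamma)$ over $y_1$ both $|X^{y_1}|$ and $|Y^{z-y_1}|$ are at least $(1-O(\gamma))|I|/d$, whence Theorem~\ref{CD_technical} applied to these fibres yields $\E_{y_1}|X^{y_1}+B_{z-y_1}|\ge|I|/d$. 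Summing over all $d$ sum-fibres gives $\E|X'+Y'|\ge|I|+|J|-O(d)$.

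The final step is to close the $O(d)$ gap between this and the target $|X|+|Y|-1$ using the corrective sets $C_X$ and $C_Y$: adjoining to each side a short arithmetic progression of length $\asymp d$ from $X$ and $Y$ (which exists since both are $\gamma$-close to intervals) augments the sumset by an AP of length $\asymp d$ via Cauchy--Davenport, easily recovering the discrepancy. The probabilistic method then delivers a specific realisation meeting the bound. The main obstacle is the endpoint bookkeeping, both fibrewise (to force $\{q_\ell,q_r\}\subset B$ in the hypothesis of Theorem~\ref{CD_technical} on each $P_y$) and globally (to engineer $C_X,C_Y$ bridging the residual gap); once these details are arranged, the fibrewise probabilistic argument flows cleanly from linearity of expectation.
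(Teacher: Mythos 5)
Your overall strategy (pass to $\mathbb{Z}$, fibre mod $d\asymp c_2/k$, apply Theorem~\ref{CD_technical} within fibres to get families depending only on $Y$, take the largest $X$-fibre plus random extra $X$-fibres, sum fibrewise, then patch an $O(d)$ deficit) is exactly the paper's, but there is a genuine quantitative gap at the step where you handle the sum-fibres not reachable from the main fibre $X^{y_0}$. You take a \emph{single} uniformly random fibre $y_1$ and claim that with probability $1-O(\gamma)$ both $X^{y_1}$ and $Y^{z-y_1}$ are dense. That probability bound is not justified: from $|I\setminus X|,|J\setminus Y|\le\gamma\min(|I|,|J|)$ and Markov, the fraction of fibres whose deficiency exceeds a constant multiple of the average is only bounded by a constant (e.g.\ $1/10$ each, giving success probability $8/10$, which is what the Section~4 sketch uses); making the exceptional fraction $O(\gamma)$ forces a deficiency threshold of order $\min(|I|,|J|)/d$, which destroys the fibre-level closeness needed to invoke Theorem~\ref{CD_technical}. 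And a constant failure probability is fatal here: at a bad sum-fibre the target is about $|X^z|+|Y^z|-1\approx|I|/d$, while a successful pairing of two dense fibres yields only about $(|I|+|J|)/d$, a relative surplus of order $|J|/|I|\ge\alpha$ (and, measured against the correct target $|X^z|+|Y^z|-1$, only of order $\alpha\gamma'$). So the failure probability must be at most roughly $\alpha\gamma'$, not a constant, and not even $O(\gamma)$, since in Theorem~\ref{all_thm3} the constant $\gamma$ is chosen \emph{before} $\alpha$ and $\alpha$ may be arbitrarily small. Your computation implicitly assumes $|I|\asymp|J|$ (the sketch's case $I=J$), where a factor-$2$ surplus beats an $8/10$ success rate; it does not survive for general $\alpha$.

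This is precisely why the paper's family $\mathcal{G}$ consists of $X^0_d\cup X^{x_1}_d\cup\dots\cup X^{x_t}_d$ with $t=t(\alpha,\gamma')\asymp\log\big(1/(\alpha\gamma')\big)$ independent draws from the very dense fibres $E_X'$: the failure probability $\Pr(\forall i:\ z-x_i\notin E_Y')\le(2/3)^t$ is driven below the margin $1-(1+\alpha\gamma'/100)^{-1}$, and this is also why the final constant $c\asymp t(k+1)$ depends on $\alpha$ while $\gamma$ does not. To repair your argument you would need to replace the single $y_1$ by such a logarithmic number of independent random dense fibres and redo the expectation against the target $|X^z_d|+|Y^z_d|-1$ per sum-fibre (summing to $|X|+|Y|-d$), after which your endgame (padding by $\le d$ points on each side, which your budget $c_1c_2\ge c\max(|X|,|Y|)$ accommodates) goes through. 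Two smaller slips: your bound $|X^{y_0}+Y'|\ge|\pi(Y)|\,|X^{y_0}|+|J|-|\pi(Y)|$ should have $|Y|$ in place of $|J|$ (fibres of $J$ may entirely miss $Y$, since $d$ can greatly exceed $1/\gamma$), though $|Y|$ suffices; and you cannot ``add the endpoints of $P_y$ to $Y^y$'' --- instead shrink $P_y$ to the interval spanned by $Y^y$, which only strengthens the conclusion of Theorem~\ref{CD_technical}.
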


\begin{proof}
Since $|I| +|J| \leq p$, we may assume that the ambient space is $\mathbb{Z}$ rather than $\mathbb{Z}_p$.
Let $\gamma'$ and $k$ be the outputs of Theorem~\ref{CD_technical} with input $\alpha/2$ (since we are now in
$\mathbb{Z}$, there is no $\beta$, or more formally we are applying Theorem~\ref{CD_technical} inside $\mathbb{Z}_q$ for
some much larger $q$ with say $\beta=1/2$).
By increasing $\gamma'$ if
necessary we may assume that
$k \geq 100 /\gamma'$. Set $\gamma=\gamma'/100$ and
let $t=\lceil\log_{2/3}(1-(1+\alpha \gamma'/100)^{-1}) \rceil$, and put $c=2^5t(k+1)$.  We may
assume by symmetry  that
$c_1 \geq c_2$.   Let $d= \lfloor c_2(k+1)^{-1}\rfloor$. Note that the hypothesis forces $c_2 \geq c \geq 2(k+1)$, which
ensures that $d$ is positive. \\

The definition of $d$ and the inequality $k \ge 100/\gamma'$ imply
\begin{equation}\label{03.005}
    \min(|I|,|J|) \geq \min(|X|,|Y|) \geq c_2 \geq (k+1)d \geq 100d/ \gamma'.
\end{equation}

Given a set $Z$, recall that we write $Z^x_d$ for $Z \cap (x \text{ mod } d)$, the points of $Z$ in a fibre. Since $I$ and $J$ are intervals,  for every $x \in \mathbb{Z}_d$ we have
\begin{equation*}\label{03.025}
     |I|/d+1 \geq  |I^x_d| \geq |I|/d-1   \text{ and } |J|/d+1 \geq |J^x_d| \geq |J|/d-1.
\end{equation*}
Combining the last two inequalities, for every $x \in \mathbb{Z}_d$ we have
\begin{equation}\label{03.027}
     |I^x_d|  \geq |I|/d- (\gamma'/100 d) \min(|I|,|J|)  \text{ and }  |J^x_d| \geq |J|/d-(\gamma'/100 d) \min(|I|,|J|)
\end{equation}
and
\begin{equation}\label{03.028}
     |I^x_d|  \leq |I|/d+ (\gamma'/100 d) \min(|I|,|J|)  \text{ and }  |J^x_d| \leq |J|/d+(\gamma'/100 d) \min(|I|
,|J|).
\end{equation}

\noindent
We may assume (by taking a translate of $X$, if necessary) that
$|X^0_{d}| = \max_{x \in \mathbb{Z}_{d}}|X^x_{d}|$. Then
\begin{equation*}\label{03.01}
    |X^0_{d}| \geq |X|/d \geq |I|/d- (\gamma/ d) \min(|I|,|J|) \geq  |I|/d-  (\gamma'/3 d) \min(|I|,|J|).
\end{equation*}

Now define the sets $E_X, E_Y \subset \mathbb{Z}_d$ by
\[
E_X=\{x \in \mathbb{Z}_d \text{ : } |X^x_{d}| \geq (|I|/d)- (\gamma'/3 d) \min(|I|,|J|) \}
\]
and
\[
E_Y= \{x \in \mathbb{Z}_d \text{ : } |Y^x_{d}| \geq (|J|/d)- (\gamma'/3 d) \min(|I|,|J|) \}.
\]

For all $x\in E_X$ and $y \in E_Y$, noting that $X^x_d \subset I^x_d$ and $Y^y_d \subset J^y_d$, we see by  \eqref{03.027} and
\eqref{03.028} that
\[
\max(|X^x_d \Delta I^x_d|, |Y^x_d \Delta J^y_d|) \leq (1/3+1/100) (\gamma'/d) \min(|I|,|J|) \leq \gamma'
\min(|I^x_d|, |J^y_d|).
\]
Since $(\alpha/2)|J^y_d| \leq |I^x_d| \leq  (2/\alpha)|J^y_d|$, by Theorem~\ref{CD_technical}, there is a family $\mathcal{F}^y_d$ of subsets of $Y^y_d$ of size $k$, depending only on $Y^y_d$ (not on $X^x_d$), such that
\begin{equation}\label{03_06}
\E_{Z\in \mathcal{F}^y_d} |X^x_d+Z| \geq |X^x_d|+|Y^y_d|-1.
\end{equation}

Now construct a family $ \mathcal{F} = \{ \cup_{y \in E_Y} F^y_d \text{ : } F^y_d \in \mathcal{F}^y_d \}$,
and note that each set $F \in \mathcal{F}$ satisfies $|F| \leq |E_Y| k \leq dk \leq c_2-d$.
Define sets $ E_x' \subset E_X$ and $E_Y' \subset E_Y$ by
\[
E_X'=\{x \in \mathbb{Z}_d \text{ : } |X^x_{d}| \geq (|I|/d)- (\gamma'/10 d) \min(|I|,|J|) \}
\]
and
\[
E_Y'= \{x \in \mathbb{Z}_d \text{ : } |Y^x_{d}| \geq (|J|/d)- (\gamma'/10 d) \min(|I|,|J|) \}.
\]
By  Markov's inequality,
\begin{equation*}
  \mathbb{P}(E_X') \geq 1-\frac{\gamma}{(1/10-1/100)\gamma'}
  \geq \frac{2}{3} \ \  \text{and} \ \ \mathbb{P}(E_Y') \geq 1- \frac{\gamma}{(
  1/10-1/100)\gamma'} \geq \frac{2}{3}.
\end{equation*}
Simple calculations using \eqref{03.027} and \eqref{03.028} now show that for all $x' \in
E_X'\subset E_X, y' \in E_Y'\subset E_Y, x \in \mathbb{Z}_d$ and $y \in \mathbb{Z}_d \setminus E_y  $ we have
\begin{eqnarray*}
       |X^{x'}_d|+|Y^{y'}_d| &\geq& |I|/d +|J|/d -(\gamma'/5 d)  \min(|I|,|J|)\\
       &\geq& [|I|/d + (\gamma'/100 d) \min(|I|,|J|)] + [|J|/d-  (\gamma'/3d) \min(|I|,|J|)]\\
       &+& (1/3-1/5-1/100 )(\gamma'/d) \min(|I|,|J|)\\
       &\geq& |X^x_d|+|Y^y_d| +(\gamma'/10 d) \min(|I|,|J|)\\
       &\geq& |X^x_d|+|Y^y_d| +(\alpha \gamma'/10 d) \max(|I|,|J|)
       \geq (1+\alpha \gamma'/100) (|X^x_d|+|Y^y_d|).
\end{eqnarray*}

\noindent
Now consider the family
$ \mathcal{G} = \{ X^0_d\cup_{i=1}^t X^{x_i}_d \text{ : } x_i \in E_X'\}$.
By \eqref{03.028}, every set $G \in \mathcal{G}$ satisfies
\begin{eqnarray*}
        |G| &\leq& (t+1)(|I|/d+ (\gamma'/100 d) \min(|I|,|J|)) \leq 8td^{-1}|X|\\
        &\leq& 2^{4}t(k+1)c_2^{-1}|X| \leq 2^{-1}cc_2^{-1}|X| \leq  2^{-1}c_1\leq c_1-d.
\end{eqnarray*}

The last ingredient needed to complete the proof of Theorem~\ref{all_thm3} is the following lemma.

\begin{lemma}\label{F-and-G}
The families ${\mathcal F}$ and ${\mathcal G}$ are such that
$$\E_{X' \in \mathcal{G},\ Y'\in \mathcal{F} } |X'+Y'| \geq |X|+|Y|-d.$$
\end{lemma}
\begin{proof}
By the linearity of expectation, it is enough to show that for all $z \in \mathbb{Z}_d$ we have
$$\E_{X' \in \mathcal{G}, \ Y'\in \mathcal{F} } |(X'+Y')^z_d| \geq |X^z_d|+|Y^z_d|-1.$$
First assume that $z \in E_Y$. Then, using \eqref{03_06}, we get
\begin{equation*}
\E_{X' \in \mathcal{G},\ Y'\in \mathcal{F} } |(X'+Y')^z_d| \geq \E_{Z\in \mathcal{F}^z_d } |(X^0_d+Z)^z_d|
  = \E_{Z\in \mathcal{F}^z_d } |X^0_d+Z|
    \geq |X^0_d|+|Y^z_d|-1 \geq |X^z_d|+|Y^z_d|-1.
\end{equation*}
Now assume instead that $z \not \in E_Y$. Then, using \eqref{03_06}, ${\mathbb P}(E_X')\ge 2/3$, and our bound on $|X^{x'}_d|+|Y^{y'}_d|$, we obtain
\begin{eqnarray*}
&&\E_{X' \in \mathcal{G}, \ Y'\in \mathcal{F} } |(X'+Y')^z_d| \geq
      \E_{\substack{x_1, \hdots, x_t \in E_X' \\ Y'\in \mathcal{F} }} |((\cup_iX^{x_i}_d)+Y' )^z_d|\\
&& \hspace{30pt}\geq \E_{\substack{x_1, \hdots, x_t \in E_X' \\ Y'\in \mathcal{F} }}
      \bigg( |((\cup_iX^{x_i}_d)+Y' )^z_d| \text{ : } \exists i \text{ such that } z-x_i \in E_Y' \bigg)  \\
       && \hspace{130pt}  \times\Prob_{x_1, \hdots, x_t \in E_X'} \bigg(\exists i \text{ such that } z-x_i \in E_Y'  \bigg)\\
         &&\hspace{30pt}\geq \E_{\substack{x \in E_X' \cap (z-E_Y') \\ Y'\in \mathcal{F} }}  |(X^{x}_d+Y' )^z_d|    (1-(2/3)^t)
        \geq \E_{\substack{x \in E_X' \cap (z-E_Y') \\ Z\in \mathcal{F}^{z-x}_d }}  |X^{x}_d+Z |   (1-(2/3)^t)\\
        &&\hspace{30pt} \geq (|X^x_d|+|Y^{z-d}_d|-1) (1-(2/3)^t)
           \geq \bigg((1+\alpha \gamma'/100) (|X^0_d|+|Y^z_d|)-1\bigg)(1-(2/3)^t)\\
        &&\hspace{30pt}\geq (1+100^{-1}\alpha \gamma') (|X^0_d|+|Y^z_d|)(1-(2/3)^t)-1
          \geq |X^0_d|+|Y^z_d|-1 \geq |X^z_d|+|Y^z_d|-1,
\end{eqnarray*}
proving Lemma~\ref{F-and-G}.
\end{proof}

To complete the proof of Theorem~\ref{all_thm3}, note that if
$X' \in X^{(c_1-d)}$ and $Y' \in Y^{(c_2-d)}$ satisfy
$|X'+Y'| \geq |X|+|Y|-d$, then
there exist sets $X'' \in X^{(c_1)}$ and $Y'' \in Y^{(c_2)}$ such that
$|X''+Y''| \geq |X|+|Y|-1.$
\end{proof}

Having proved Theorem~\ref{all_thm3}, we turn to `improving the setup by changing the ends of the
intervals', the step we mentioned in our sketch of the proof at the start of the section.

\begin{lemma}\label{all_lem4}
  For all $\beta$ and $\gamma$ with $2^{-20}>  \beta >2^{20} \gamma >0$ the following holds. Let $A$ and $B$ be
  subsets of $\mathbb{Z}_p$ and
  let $I$ and $J$ be intervals of $\mathbb{Z}_p$ such that
$|A|+|B| \leq (1-\beta)p$ and $\max(|A\Delta I|, |B\Delta J|) \leq \gamma \min(|A|, |B|)$.
Then in $\mathbb{Z}_p$ there are two sets of three consecutive intervals, $I_1, I_2, I_3$ and $J_1, J_2, J_3$, with
\[
\lfloor  (\beta/8) p \rfloor \le    \min(|I_1|, |I_3|,|J_1|,|J_3|)\le \max(|I_1|, |I_3|,|J_1|,|J_3|) \leq  (\beta/4) p
\]
such that, setting $A_i=A\cap I_i$ and $B_i=B \cap J_i$, we have $|A_1|=|B_1|$, $|A_3|=|B_3|$, and
\begin{equation}\label{all_eq7}
\max(|A\Delta I_2|, |B\Delta J_2|) \leq 2^{10}\gamma \min(|A|, |B|).
\end{equation}
Moreover, for $i \in \{1,3\}$ and for any two arithmetic progressions $P$ and $Q$ we have
\begin{equation*}
    \max(|A_i \Delta P|, |B_i \Delta Q |) \geq  |A_i|/2^{10}=  |B_i|/2^{10}.
\end{equation*}
\end{lemma}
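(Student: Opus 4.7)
The plan is to place the six consecutive intervals by starting from the given $I$ and $J$ and adjusting their endpoints. Since $|A|+|B| \le (1-\beta)p$ and $|A \Delta I|, |B \Delta J| \le \gamma \min(|A|,|B|)$, the complement $\mathbb{Z}_p \setminus (I \cup J)$ has length at least $\beta p/2$ (using $\beta \ge 2^{20}\gamma$), which gives ample room to place four outer arcs of length in $[\beta p/8, \beta p/4]$ immediately adjacent to the middle arcs $I_2, J_2$. Initially I take $I_2 := I$ and $J_2 := J$, and let $I_1, I_3$ (resp.\ $J_1, J_3$) be arcs of length $\lfloor \beta p / 6 \rfloor$ on either side of $I_2$ (resp.\ $J_2$); this baseline satisfies $|A \Delta I_2| = |A \Delta I| \le \gamma \min(|A|,|B|)$ and the analogous bound for $B$.

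To achieve the size equalities $|A_1|=|B_1|$ and $|A_3|=|B_3|$, I use a sliding argument on the four ``interior'' boundaries (the two between $I_1,I_2$ and $I_2,I_3$, and the analogous two for $J$). Each of these can be moved by up to $\Theta(\gamma \min(|A|,|B|))$ units without violating the symmetric-difference bound, since every unit shift grows $|I_2 \Delta I|$ by at most $1$. As a boundary crosses an $A$-point the quantity $|A_1|$ changes by $\pm 1$, so $|A_1|$ realizes every integer value in some range, and similarly for $|B_1|$; since both ranges share a common interval of length $\Theta(\gamma \min(|A|,|B|))$ containing small values, I can independently equalize the left and the right pair at any desired common values $m_1, m_3$ within this range, keeping $|A \Delta I_2|, |B \Delta J_2| \le 2^{10}\gamma \min(|A|,|B|)$.

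The delicate step is choosing $m_1$ (and $m_3$) so that the non-AP condition holds. The hypothesis that $\max(|A_i \Delta P|, |B_i \Delta Q|) \ge |A_i|/2^{10}$ for \emph{every} pair of APs $P, Q$ is equivalent (by a simple logical manipulation) to the assertion that at least one of $A_i$, $B_i$ is far from every arithmetic progression. If $A \setminus I_2$ is itself not AP-structured, then any sub-selection $A_1$ realized by the sliding is also AP-far, and I win on the $A$-side; the symmetric statement handles $B$. The remaining ``rigid'' case---where both $A \setminus I_2$ and $B \setminus J_2$ happen to be close to arithmetic progressions---is handled by a pigeonhole over the $\Omega(\gamma \min(|A|,|B|))$ admissible values of the common size, counted against the polynomially many candidate AP pairs of relevant length; for most admissible choices, at least one of $A_1, B_1$ will fail AP-closeness.

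The main obstacle I anticipate is precisely this rigid case. Its resolution hinges on the \emph{joint} formulation of the non-AP condition---only one of $A_i$, $B_i$ needs to be AP-far---which is what makes the pigeonhole feasible; the condition would be much harder (perhaps impossible) if it demanded individual AP-farness of both sides. Making the pigeonhole precise requires a careful parametrization of the boundary-sliding freedom and explicit bounds on how a sub-AP of an AP can be approximated by other APs of potentially different common difference. I expect the bookkeeping to be the most labour-intensive part of the argument, but there is enough flexibility from four independent boundary slides to push it through.
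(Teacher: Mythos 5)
The easy parts of your plan (placing the outer arcs of length $\Theta(\beta p)$ next to the middle intervals, and equalizing $|A_1|=|B_1|$, $|A_3|=|B_3|$ by a discrete intermediate-value slide of the interior boundaries, which costs only $O(\gamma\min(|A|,|B|))$ in symmetric difference) are sound and correspond to what the paper does. The genuine gap is in the non-AP step, and both of your devices for it fail. First, the claim that if $A\setminus I_2$ is ``not AP-structured'' then any sub-selection $A_1$ produced by the sliding is also AP-far is false: AP-farness is not inherited by subsets (any one or two points form an AP, and $A_1$ moreover contains slid-in points of $A\cap I$, so it is not even a subset of $A\setminus I_2$). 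Second, the pigeonhole in your ``rigid case'' cannot work by counting: there are on the order of $p^2$ arithmetic progressions against only $O(\gamma\min(|A|,|B|))$ admissible slide positions, and in genuinely rigid configurations it is simply not true that ``most'' admissible choices work. For example, take $B=J$ exactly and $A=I\cup R$ with $R$ a short AP sitting at distance less than $\lfloor\beta p/8\rfloor$ beyond an end of $I$ (so $R$ is forced into $A_3$, since absorbing it into $I_2$ would cost far more than $2^{10}\gamma\min(|A|,|B|)$). Then $B_3$ is an interval, hence an AP, for \emph{every} slide, so you must make $A_3=R\cup(\text{end segment of }I)$ far from \emph{every} AP, and whether a given slide amount achieves this depends delicately on the structure of $R$; only certain carefully chosen amounts work. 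Exhibiting such a choice in general is the whole content of the lemma, and your sketch does not supply it.

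For comparison, the paper resolves exactly this point by an extremal construction rather than by randomness or counting: it takes $I_2'$ \emph{maximal} subject to \eqref{all_eq7}, which yields the leverage \eqref{eqp_003} that $A$ has density below $1/2$ on any interval just outside $I_2'$; it then cuts end-chunks $I_1'',I_3''$ of $I_2'$ chosen \emph{minimal} subject to the two conditions \eqref{eqp_004}--\eqref{eqp_005} (the cut part must outweigh the forced outer points by a factor $14$ and be interval-approximable within error $1/14$), and a two-case analysis (equality versus strict inequality in \eqref{eqp_004}) shows that the resulting $A_1=A_1'\cup A_1''$ satisfies $|A_1\Delta P|\ge |A_1|/56$ for every AP $P$; the $B$-side is only used to match sizes. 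Note in particular that the paper proves the stronger one-sided statement \eqref{eqp_011} for $A_i$ alone, so the ``joint'' $\max$ formulation you identified as the crucial enabler of your pigeonhole is not in fact what makes the lemma true. To repair your proof you would need to replace the inheritance claim and the pigeonhole by some such extremal choice of where to cut, which is essentially the paper's argument.
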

\begin{proof}
  Let $I_2'$ and $J_2'$ be maximal intervals in $\mathbb{Z}_p$  satisfying \eqref{all_eq7}: note that such intervals do exist by
  hypothesis. By our bounds on $|A|+|B|$, $|A\Delta I|$ and $|B\Delta J|$,  we have
\begin{equation*}\label{eqp_001}
    |I_2'|+|J_2'| \leq |A|+|B|+ 2 \gamma \min(|A|,|B|) \leq (1-\beta +2\gamma) p \leq (1- (\beta/2)) p.
\end{equation*}
Thus we can construct intervals $I_1', I_3'$ and $J_1', J_3'$ such
that  $I_1',I_2',I_3'$ and $J_1', J_2', J_3'$ are two families of three consecutive
 intervals of $\mathbb{Z}_p$ with
\begin{equation}\label{eqp_002}
|I_1'|=|I_3'|=|J_1'|=|J_3'|= \lfloor (\beta/8) p \rfloor.
\end{equation}
Note that, by  the maximality of $I_2'$ and $J_2'$, for any intervals $I_1''', I_3'''$ and $J_1''', J_3'''$
such that $I_1''', I_2', I_3'''$ and $J_1''', J_2', J_3'''$ are familes of consecutive
intervals, if we set $A_i'''=A \cap I_i'''$ and $B_i'''= B \cap J_i'''$, then we have
\begin{equation}\label{eqp_003}
    |I_i'''| > 2 |A_i'''| \text{ and } |J_i'''| > 2 |B_i'''|.
\end{equation}

Let $A_i'=A \cap I_i'$ and $B_i'=B\cap J_i'$, and assume that
$|A_1'| \geq |B_1'| \text{ and } |A_3'| \geq |B_3'|$.
(The three other cases are analogous.)
Note that by \eqref{all_eq7} we have
\begin{equation}\label{eqp_0030}
 \max(|A_1'|, |A_3'|, |I_2' \setminus A_2'|)  \leq  \gamma \min(|A|, |B|)  \leq 2\gamma \min( |A_2'|, |B_2'|) .
\end{equation}
Now consider subintervals $I_1''$ and $I_3''$ at the ends of interval $I_2'$ that are minimal subject
to the following two properties:
setting $A_i''=A \cap I_i''$, we have
\begin{equation}\label{eqp_004}
|A_1''| \geq 14|A_1'| \text{ and } |A_3''|\geq 14|A_3'|
\end{equation}
and there exist intervals $P_1''$ and $P_3''$ (contained inside $I_1''$ and $I_3''$ respectively) such that
\begin{equation}\label{eqp_005}
|P_1'' \Delta A_1''| \leq |A_1''|/14 \text{ and } |P_3'' \Delta A_3''|\leq |A_3''|/14.
\end{equation}

(Here we insist that if $A_1'=\emptyset$ or $A_3'=\emptyset$ then $I_1''=P_1''=A_1''=
\emptyset$ or $I_3''=P_3''=A_3''=\emptyset$, respectively.)
 Note that such intervals $I_1''$ and $I_3''$ do exist, as the intervals $I_1''=P_1''=I_3''=P_3''=I_2'$ have the desired properties by \eqref{eqp_0030}.

 We now show that
\begin{equation}\label{eqp_006}
|I_1''|+|I_3''| \leq 2^6\gamma \min(|A_2'|, |B_2'|)<|I_2'|.
\end{equation}

The right-hand inequality is immediate, since $A_2' \subset I_2'$ and $\gamma < 2^{-6}$. For the left-hand
inequality, suppose for a contradiction that say $ |I_1''| > 2^5 \gamma \min(|A_2'|,|B_2'|)$. Consider first the case when $2 \gamma \min(|A_2'|,|B_2'|) <1
$. By \eqref{eqp_0030} we have $A_1'=A_3'=\emptyset$, and hence we obtain $I_1''=I_3''=\emptyset$, which is a contradiction.

Consider now the case when $2 \gamma \min(|A_2'|,|B_2'|)   \geq 1$. In this case we consider the proper subinterval $I_1'''$ at the
end of $I_1''$ with $|I_1'''|=\lfloor 2^5\gamma  \min(|A_2'|,|B_2'|)  \rfloor
\geq 30  \gamma  \min(|A_2'|,|B_2'|)$. Let $A_1'''=I_1''' \cap A$. By \eqref{eqp_0030}, we
have $| I_1''' \setminus A_1'''| \leq |I_2' \setminus A_2'| \leq 2 \gamma \min(|A_2'|,|B_2'|)$. In particular,
we have $|A_1'''| \geq 28 \gamma \min(|A_2'|, |B_2'|) $, which implies $|I_1''' \setminus A_1'''| \leq |A_1'''|/14$. But by \eqref{eqp_0030},
 we also have $|A_1'| \leq 2\gamma \min(|A_2'|, |B_2'|)$, which implies $|A_1'''| \geq 14|A_1'|$.

Therefore $I_1'''$ is an interval strictly smaller than $I_1''$ with the desired properties, giving a contradiction. This proves
 inequality \eqref{eqp_006}.

 By \eqref{eqp_006}, the intervals $I_1''$ and $I_3''$ induce a partition $I_2'=I_1''\sqcup I_2 \sqcup I_3''$ into consecutive
 intervals. Moreover, by \eqref{eqp_006} and \eqref{all_eq7} we get
\begin{equation}\label{eqp_007}
|A\Delta I_2| \leq |A\Delta I_2'| + |I_1''|+|I_3''| \leq 2^7 \gamma \min(|A_2'|,|B_2'|).
\end{equation}

Note also that by \eqref{all_eq7} we have
\begin{equation}\label{eqp_0031}
|J_2'\setminus B_2'| \leq \gamma \min(|A|,|B|) \leq 2 \gamma \min(|A_2'|,|B_2'|).
\end{equation}

Now consider subintervals $J_1''$ and $J_3''$ at the ends of $J_2'$ such that with $B_i''=B\cap J_i''$ we
have
\begin{equation}\label{eqp_0032}
|B_1'|+|B_1''|=|A_1'|+|A_1''| \text{ and } |B_3'|+|B_3''|=|A_3'|+|A_3''|
\end{equation}
Note that there are such intervals, since for $i \in \{1,3\}$ both $|B_i'| \leq |A_i'| \leq |A_i'|+|A_i''|$
and $|B_i'|+|B_2'| \geq |B_2'| \geq |A_i'|+|I_i''| \geq |A_i'|+|A_i''|$ hold. We now show that
\begin{equation}\label{eqp_0040}
|J_1''|+|J_3''| \leq 2^8\gamma \min(|A_2'|, |B_2'|)<|J_2'|.
\end{equation}

Assume for a contradiction that $|J_1''| \geq 2^7 \gamma \min(|A_2'|, |B_2'|)$. On the one hand, by \eqref{eqp_0031}
we deduce $|B_1''| \geq |J_1''|- |J_2' \setminus B_2'| \geq 126 \gamma \min(|A_2'|,|B_2'|)$. On the other hand,
by   \eqref{eqp_0030} we have $|A_1'| \leq 2 \gamma
\min(|A_2'|, |B_2'|)$ and by \eqref{eqp_006} we have $|A_1''| \leq |I_1''| \leq 2^6 \gamma \min(|A_2'|, |B_2'|)$.
Thus we
obtain $|B_1''| > |A_1'|+|A_1''|$, which gives the desired contradiction. This proves inequality \eqref{eqp_0040}.

By \eqref{eqp_0040}, the intervals $J_1''$ and $J_3''$ induce a partition $J_2'=J_1''\sqcup J_2 \sqcup J_3''$ into
consecutive intervals. Moreover, by \eqref{eqp_0040} and \eqref{all_eq7} we have
\begin{equation}\label{eqp_008}
|B\Delta J_2| \leq |B\Delta J_2'| + |J_1''|+|J_3''| \leq 2^9 \gamma \min(|A_2'|,|B_2'|).
\end{equation}

For $i \in \{1,3\} $, set $I_i=I_i' \sqcup I_i''$, $J_i= I_i'\sqcup I_i''$, and note that $I_1, I_2, I_3$ and $J_1, J_2, J_3$ are consecutive intervals. Moreover, by \eqref{eqp_002} we have
\begin{equation}\label{eqp_009}
\min(|I_1|, |I_3|, |J_1|, |J_3|) \geq \lfloor (\beta/8) p \rfloor.
\end{equation}
In addition, by \eqref{eqp_002}, \eqref{eqp_006} and \eqref{eqp_0031}, we also have
\begin{equation}\label{eqp_0010}
\max(|I_1|, |I_3|, |J_1|, |J_3|) \leq  (\beta/4) p .
\end{equation}
For $i \in \{1,2,3\}$ let $A_i=A\cap I_i$ and $B_i = B \cap J_i$. By \eqref{eqp_0032} we have
\begin{equation}\label{eqp_010}
|A_1|=|B_1| \text{ and } |A_3|=|B_3|.
\end{equation}

It remains to show that for any arithmetic progressions $P_1$ and $P_3$ we have
\begin{equation}\label{eqp_011}
|A_1 \Delta P_1| \geq 2^{-10}|A_1| \text{ and } |A_3 \Delta P_3| \geq 2^{-10}|A_3|.
\end{equation}
Assume for a contradiction that
\begin{equation}\label{eqp_012}
|A_1 \Delta P_1| < 2^{-10}|A_1|,
\end{equation}
which in particular means that
\begin{equation}\label{eqp_0125}
A_1\neq \emptyset \text{ i.e. } A_1' \neq \emptyset.
\end{equation}
Note that \eqref{eqp_004}, \eqref{eqp_005} and \eqref{eqp_012} imply
\begin{eqnarray*}
    |P_1'' \Delta P_1| &&\leq |A_1 \Delta P_1| + |P_1'' \Delta  A_1|
     \leq |A_1 \Delta P| + |A_1'| + |P_1'' \Delta A_1''| \\
    &&\leq |A_1|/2^{10} + |A_1''|/14+|A_1''|/14 \leq (1/2^9+1/7) |A_1''| \le |P_1''|/4.
\end{eqnarray*}

Recall that, when $A_1' \neq \emptyset$, $P_1''$ is a subinterval of $I_1''$ with $p/8 \geq |P_1''| \geq 8$
by \eqref{eqp_004}, \eqref{eqp_005} and \eqref{eqp_006}. It follows that $P_1$ is an interval intersecting the
interval $I_1''$ of size $p/4 \geq |P_1| \geq 4$. By \eqref{eqp_0010} we also
deduce $|P_1|+|I_1| \leq p/2$, which implies
that $P_1 \cap I_1$ is an interval. By replacing $P_1$ with $P_1\cap I_1$, we may assume that $P_1$ is a
subinterval of $I_1$.

We distinguish two cases: recalling that $I_1''$ and $I_3''$ were chosen to be minimal subject
to \eqref{eqp_004} and \eqref{eqp_005}, we ask whether the lower bound on the size of $A_1''$ in \eqref{eqp_004} is attained or not.

\vspace{5pt}
\textbf{Case A. $|A_1''|=14|A_1'|$.}\\

In this case, by \eqref{eqp_003}, we have
\begin{equation*}\label{eqp_015}
|(P_1 \cap I_1')\Delta A_1'| \geq |A_1'|.
\end{equation*}
So we obtain
\begin{equation}\label{eqp_016}
  |A_1\Delta P_1| \geq |(P_1 \cap I_1')\Delta A_1'| \geq |A_1'| = |A_1|/15.
\end{equation}

\vspace{5pt}
\textbf{Case B. $|A_1''|>14|A_1'|\geq 14$.}\\

In this case, by the minimality of $I_1''$, if we let $x$  be the last point inside $A_1''$, then by \eqref{eqp_005} we deduce
\begin{equation*}\label{eqp_018}
 |(P_1 \cap I_1'') \Delta  (A_1'' \setminus \{x\})| > |A_1'' \setminus \{x\}| /14 .
\end{equation*}
This is equivalent to
\begin{equation*}\label{eqp_019}
 |(P_1 \cap I_1'') \Delta  (A_1'' \setminus \{x\})| \geq \min(2, |A_1'' \setminus \{x\}| /14) ,
\end{equation*}
and hence
\begin{equation*}\label{eqp_020}
 |(P_1 \cap I_1'') \Delta  A_1''| \geq \min(1, (|A_1''|-1)/14-1) \geq |A_1''|/28.
\end{equation*}
Thus we obtain
\begin{equation}\label{eqp_021}
 |A_1\Delta P_1| \geq |(P_1 \cap I_1'')\Delta A_1''| \geq |A_1''|/28 \geq |A_1|/56.
\end{equation}

Inequalities \eqref{eqp_016} and \eqref{eqp_021} imply that, in either case,
$|A_1 \Delta P_1| \geq |A_1|/56$ , contradicting \eqref{eqp_012} and so proving  \eqref{eqp_011}.
The proof of Lemma~\ref{all_lem4} is now complete, thanks to \eqref{eqp_009}, \eqref{eqp_0010} \eqref{eqp_010},
\eqref{eqp_007}, \eqref{eqp_008} and \eqref{eqp_011}.
\end{proof}

Our next lemma is a simple fact about the `stickout' of sumsets from a set of fixed size:
roughly speaking, it says that if $Y$ is much larger than $X$ then the sum of $X$ with a random few points of
$Y$ is expected to be
much larger than $2|X|$.

\begin{lemma}\label{all_lem7}
  Let $X$, $Y$ and $Z$ be subsets of $\mathbb{Z}_p$ and let $1 \leq c_1 \leq |X|$ and $1 \leq c_2 \leq |Y|$ be
  integers such that $c_1c_2 \geq 16 |X| $. Suppose that $8|X|, 8|Z| \leq |Y|<p/2$.
Then there exist
$X' \in X^{(c_1)}$ and $Y' \in Y^{(c_2)}$ such that $|(X'+Y')\setminus Z| \geq 2|X|$.
\end{lemma}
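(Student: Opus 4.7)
The plan is to fix an arbitrary $X' \in X^{(c_1)}$, pick $Y' \in Y^{(c_2)}$ uniformly at random, and prove
\[
\E\,|(X'+Y')\setminus Z| \;\ge\; 2|X|;
\]
some outcome of the sampling must then witness the required pair. For each $w \in \mathbb{Z}_p$, I would write $N_w := |(w-X')\cap Y|$, which counts the representations $w=x+y$ with $x \in X'$, $y \in Y$. Using the standard hypergeometric tail bound, one has
\[
\Prob[w \in X'+Y'] \;=\; 1-\frac{\binom{|Y|-N_w}{c_2}}{\binom{|Y|}{c_2}} \;\ge\; 1-\Bigl(1-\tfrac{N_w}{|Y|}\Bigr)^{c_2} \;\ge\; \min\!\Bigl(\tfrac{c_2 N_w}{2|Y|},\tfrac12\Bigr),
\]
where the final step uses $1-e^{-t}\ge \min(t/2,1/2)$; this gives a per-point lower bound on the probability that $w$ is hit.

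Next I would assemble two easy global counts from the hypotheses $8|X|,8|Z|\le |Y|<p/2$. Since $|X'|+|Y| \le |X|+|Y| < p$, the Cauchy--Davenport theorem gives $|X'+Y| \ge c_1+|Y|-1$; moreover $|\{(x,y) \in X' \times Y : x+y \in Z\}| \le c_1|Z| \le c_1|Y|/8$, so
\[
\sum_{w \in (X'+Y)\setminus Z} N_w \;=\; c_1|Y| - |\{(x,y) \in X' \times Y : x+y \in Z\}| \;\ge\; \tfrac{7}{8}\,c_1|Y|.
\]

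The finish is a clean dichotomy on the set $T := \{w \in (X'+Y)\setminus Z : N_w \ge |Y|/c_2\}$ of ``popular'' sums outside $Z$. If $|T| \ge 4|X|$, then each $w \in T$ contributes at least $1/2$ to the expectation, giving $\E|(X'+Y')\setminus Z| \ge |T|/2 \ge 2|X|$ at once. Otherwise $|T| \le 4|X| \le |Y|/2$, and since $N_w \le |X'| = c_1$ everywhere, the set $T$ absorbs at most $c_1|T| \le c_1|Y|/2$ of the sum $\sum N_w$; consequently the complement satisfies $\sum N_w \ge \tfrac{3}{8}c_1|Y|$, the sum being over $w \in (X'+Y)\setminus Z$ with $N_w < |Y|/c_2$. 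Using $\Prob[w \in X'+Y'] \ge c_2 N_w/(2|Y|)$ on those $w$ and invoking $c_1c_2 \ge 16|X|$ gives
\[
\E\,|(X'+Y')\setminus Z| \;\ge\; \tfrac{c_2}{2|Y|}\cdot \tfrac{3}{8}\,c_1|Y| \;=\; \tfrac{3}{16}\,c_1c_2 \;\ge\; 3|X|.
\]
Either branch beats the target $2|X|$, so a deterministic pair $(X',Y')$ of the required sizes exists. I do not foresee any serious obstacle: the only mild subtlety is that a single per-point bound on $\Prob[w \in X'+Y']$ is not sharp in both regimes of $N_w$, which is exactly what the popular/unpopular dichotomy is designed to bridge.
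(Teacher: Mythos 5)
Your proof is correct, and it takes a genuinely different route from the paper's. The paper argues by contradiction: assuming every pair $(X',Y')$ gives $|(X'+Y')\setminus Z|<2|X|$, it samples \emph{both} $X'$ (of size $c_1$) and a half-sized random $Y'$ (of size $c_2/2$), and for each chosen pair $(x_i,y_j)$ bounds below the probability that $x_i+y_j$ avoids $Z$ together with the sums of the remaining chosen points -- a set of size at most $|Z|+2|X|\le 3|Y|/8$ by the contradiction hypothesis -- so each of the $c_1c_2/2$ pairs contributes a constant to the expectation, contradicting the assumed maximum. You instead give a direct first-moment argument: you fix an \emph{arbitrary} $X'\in X^{(c_1)}$, randomize only $Y'$, and control $\Prob[w\in X'+Y']$ via the representation counts $N_w$ and a hypergeometric tail bound, combining the global count $\sum_w N_w=c_1|Y|$ (with at most $c_1|Z|\le c_1|Y|/8$ pairs lost to $Z$) with a popular/unpopular dichotomy on $N_w\gtrless |Y|/c_2$. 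Your computations check out: in the popular case each such $w$ is hit with probability at least $1/2$, and in the unpopular case the linear bound $c_2N_w/(2|Y|)$ together with $c_1c_2\ge 16|X|$ and $8|X|\le|Y|$ gives at least $3|X|$. What your route buys is a slightly stronger and cleaner statement -- every $c_1$-subset $X'$ works, with a random full-size $Y'$ succeeding in expectation -- and it avoids the paper's device of sampling only $c_2/2$ points of $Y$ (which needs $c_2\ge 2$ to parse literally); what the paper's route buys is brevity and consistency with the pair-by-pair "fresh sums" counting used elsewhere in the paper (e.g.\ in the proof of Theorem~\ref{implication_shao1-prov}). The Cauchy--Davenport remark in your write-up is correct but never used and could be deleted.
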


\begin{proof}
Suppose the assertion is false, i.e. $\max_{X' \in X^{(c_1)}, Y' \in Y^{(c_2)}} |(X'+Y')\setminus Z| < 2|X|$ which, in particular, implies that $\max_{X' \in X^{(c_1)}, Y' \in Y^{(c_2/2)}} |(X'+Y')\setminus Z| < 2|X|$. Let $X'=\{x_1, \hdots, x_{c_1}\}$ and $Y'=\{y_1, \hdots, y_{c_2/2}\}$ be elements of $X^{(c_1)}$ and $Y^{(c_2/2)}$ chosen uniformly at random. Then  ${\mathbb E} |(X'+Y')\setminus Z|$ is bounded from below as follows:
\begin{eqnarray*}
    && \ \ \   \sum_{i,j} {\mathbb E}|\{x_i+y_j\}\, \setminus \, [Z \cup (\{x_1, \hdots, x_{i-1}, x_{i+1}, \hdots, x_{c_1}\} + \{y_1, \hdots, y_{j-1}, y_{j+1}, \hdots, y_{c_2}\} ) ]  |\\
    &&= \sum_{i,j} \Prob | x_i+y_j \not \in  Z \cup (\{x_1, \hdots, x_{i-1}, x_{i+1}, \hdots, x_{c_1}\} + \{y_1, \hdots, y_{j-1}, y_{j+1}, \hdots, y_{c_2}\} ) |\\
    &&\geq \sum_{i,j} 1-\max_{ X' \in X^{(c_1)}, Y'\in Y^{(c_2/2)}}\frac{|Z \cup (X'+Y')|}{|Y|-(c_2/2)}\\
    &&\geq \sum_{i,j} 1- \frac{|Z|+2|X|}{|Y|/2} \geq \frac{c_1c_2}{2}  (1-\frac{3}{4}) \geq 2|X|,
\end{eqnarray*}
so $ {\mathbb E} |(X'+Y')\setminus Z| \geq 2|X|$, completing the proof.
\end{proof}

As an immediate corollary we have the following, obtained by applying the previous lemma inductively on $k$
(increasing $Z$ at each stage).

\begin{corollary}\label{cor_new000}
  Let $X_1, \hdots, X_k$, $Y_1, \hdots, Y_k$ and $Z$ be subsets of $\mathbb{Z}_p$ and
  let $1 \leq c_1^i \leq |X_i|$ and $1 \leq c_2^i \leq |Y_i|$ be integers such that
  for all $i$ we  have $c_1^ic_2^i\geq 16|X_i|$. Suppose
  that for all $i$ we have $16|X_i|, 16|Z|\leq |Y_i| <p/2$. Then there exist
  $X'_i \in (X_i)^{(c_1^i)}$ and $Y'_i \in (Y_i)^{(c_2^i)}$ (for each $i$) such that
\[
  \ \ \ \ \ \ |\cup_i(X'_i+Y'_i)\setminus Z|
  \geq \min(16^{-1}|Y_1|, \hdots, 16^{-1}|Y_k|, 2\sum_i|X_i|). \ \ \ \ \ \ \square
\]
\end{corollary}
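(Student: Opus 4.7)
The plan is to prove this by induction on $k$, applying Lemma~\ref{all_lem7} to one index at a time while forbidding the set $Z$ together with all sumsets chosen at earlier steps. The factor $16$ in the hypotheses (rather than $8$, which is what the lemma itself requires) provides exactly the slack that lets the forbidden set grow without violating the lemma's hypotheses.

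Concretely, I would process the indices $i=1,2,\ldots,k$ in order. At stage $i$, set
\[
Z_i \;=\; Z \cup \bigcup_{j<i}(X'_j+Y'_j),
\]
so $Z_1=Z$. Before attempting to apply Lemma~\ref{all_lem7}, test whether $|Z_i\setminus Z|\geq |Y_i|/16$. If this happens for some $i$, stop the process: already the partial union satisfies $|\bigcup_{j<i}(X'_j+Y'_j)\setminus Z|\geq |Y_i|/16\geq \min_\ell |Y_\ell|/16$, so any arbitrary completion of the remaining $X'_j,Y'_j$ gives the desired conclusion. Otherwise, $|Z_i|\leq |Z|+|Y_i|/16\leq |Y_i|/16+|Y_i|/16=|Y_i|/8$, which combined with $8|X_i|\leq |Y_i|<p/2$ and $c_1^i c_2^i\geq 16|X_i|$ verifies every hypothesis of Lemma~\ref{all_lem7} with the lemma's ``$Z$'' replaced by our $Z_i$. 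Invoke the lemma to obtain $X'_i\in X_i^{(c_1^i)}$ and $Y'_i\in Y_i^{(c_2^i)}$ with
\[
|(X'_i+Y'_i)\setminus Z_i|\;\geq\; 2|X_i|,
\]
then pass to stage $i+1$.

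If the process never stops early, then by construction the sets $(X'_i+Y'_i)\setminus Z_i$ are pairwise disjoint (each is disjoint from every previously chosen sumset, since those are absorbed into the corresponding $Z_j$'s), and all are disjoint from $Z$. Summing over $i$,
\[
\Bigl|\bigcup_i(X'_i+Y'_i)\setminus Z\Bigr|\;=\;\sum_{i=1}^k |(X'_i+Y'_i)\setminus Z_i|\;\geq\; 2\sum_{i=1}^k |X_i|.
\]
In either branch the lower bound $\min(|Y_1|/16,\ldots,|Y_k|/16,\,2\sum_i|X_i|)$ is achieved, completing the proof.

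There is no real obstacle here: the only thing to watch is that the hypothesis $16|Z|\leq |Y_i|$ is strong enough to survive the addition of earlier sumsets into the forbidden region. That is handled by the early-stopping dichotomy, which either credits the growth of $Z_i\setminus Z$ directly to the $|Y_i|/16$ term of the bound or certifies that $|Z_i|$ is still below $|Y_i|/8$, the exact threshold Lemma~\ref{all_lem7} requires.
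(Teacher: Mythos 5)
Your proof is correct and is exactly the argument the paper has in mind: the corollary is stated there as an immediate consequence of Lemma~\ref{all_lem7}, obtained by applying it inductively with $Z$ enlarged by the previously chosen sumsets, and your early-stopping dichotomy correctly accounts for the $16^{-1}|Y_i|$ terms in the minimum while keeping $|Z_i|\le |Y_i|/8$ so the lemma's hypotheses remain valid.
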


\noindent
The final ingredient we need is a somewhat cumbersome result about partitions into intervals.

\begin{lemma}\label{all_lem6}
For all $1/2^{10}> \alpha, \beta >0$ the following holds. Consider partitions $\mathbb{Z}_p= I_0 \sqcup I_1
\sqcup I_2 \sqcup I_3 =J_0 \sqcup J_1 \sqcup J_2 \sqcup J_3$ into consecutive intervals such that
$|I_2|+|J_2| \leq (1-\beta/2)p$, $(\alpha/2)|J_2| \leq |I_2| \leq (2/\alpha)  |J_2|$,
$\min(|I_2|, |J_2|) \ge 24/\beta$ and
\[
\lfloor (\beta/8) p \rfloor \le \min(|I_1|,|I_3|,|J_1|,|J_3|) \le \max(|I_1|, |I_3|,|J_1|,|J_3|) \le (\beta/4) p.
\]
Then there are four families  of subsets of $\mathbb{Z}_p$, each of size $k \leq 100/ \alpha \beta$,
\[
\mathcal{I}_0=\{I^1_0, \hdots, I^k_0\}, \ \ \mathcal{I}_2=\{I^1_2, \hdots, I^k_2\}, \ \
\mathcal{J}_0=\{J_0^1, \hdots,J_0^k\}, \ \ \mathcal{J}_2=\{J_2^1, \hdots, J_2^k\}
\]
such that $\cup_i I_0^i = I_0, \cup_i J_0^i = J_0\ \text{ and }\ \cup_i I_2^i \subset I_2, \cup_i J_2^i \subset J_2$.
Furthermore, for every $1 \leq i \leq k$ we have \ $|I_2^i|=|J_2^i|= \lfloor (\beta/24) \min(|I_2|, |J_2|) \rfloor$
and $(I_0^i+J_2^i) \cap (I_2+J_2) = (J_0^i + I_2^i) \cap (I_2+J_2) = \emptyset$.
\end{lemma}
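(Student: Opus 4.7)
My plan is to construct the four families by partitioning $I_0$ and $J_0$ into short consecutive sub-arcs, and for each sub-arc to pick a length-$L$ sub-arc of its partner interval ($J_2$ for each $I_0^i$, $I_2$ for each $J_0^i$) whose Minkowski sum with it avoids $I_2+J_2$. Set $L=\lfloor(\beta/24)\min(|I_2|,|J_2|)\rfloor$ and $L'=\lfloor\beta p/4\rfloor$. The hypothesis $\min(|I_2|,|J_2|)\ge 24/\beta$ forces $\beta p\ge 24$, which gives $1\le L\le\beta p/48$ and $L'\ge\beta p/8$; meanwhile $|I_2|+|J_2|\le(1-\beta/2)p$ makes $K:=\mathbb{Z}_p\setminus(I_2+J_2)$ a single arc of length $|K|\ge\beta p/2+1$.

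First I would partition $I_0$ into consecutive sub-arcs $I_0^1,\ldots,I_0^{k_1}$ each of length at most $L'$, and similarly $J_0$ into $J_0^1,\ldots,J_0^{k_2}$. Since $|I_0|,|J_0|\le p$ and $L'\ge\beta p/8$, both $k_1,k_2\le\lceil p/L'\rceil\le 9/\beta$. Taking $k=\max(k_1,k_2)\le 9/\beta\le 100/(\alpha\beta)$ (the last inequality uses $\alpha\le 1$), I pad the shorter family by repeating entries together with their partners, so that all four families have common size $k$.

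The key step is to show: for every sub-arc $I_0^i=[x,x+\ell-1]$ with $\ell\le L'$, there is a sub-arc $J_2^i=[y,y+L-1]\subset J_2$ with $I_0^i+J_2^i\subset K$ (and symmetrically for the $J_0^i$'s). Translate $\mathbb{Z}_p$ so that $I_2=[0,|I_2|-1]$, and write $J_2=[c',c'+|J_2|-1]$. Substituting $s=x+y$, the condition $I_0^i+J_2^i\subset K$ becomes ``$s$ lies in a cyclic arc of length $|K|-\ell-L+2$'', while $J_2^i\subset J_2$ becomes ``$s$ lies in a cyclic arc of length $|J_2|-L+1$''. A direct computation shows these two arcs meet iff $x\in[|I_2|+L-1,\,p-\ell-L+1]$. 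Since $|I_1|,|I_3|\ge\lfloor\beta p/8\rfloor$ and $L\le\beta p/48$ yield $|I_1|,|I_3|\ge L-1$, for any sub-arc $I_0^i\subset I_0$ we have $x\ge|I_2|+|I_3|\ge|I_2|+L-1$ and $x\le p-|I_1|-\ell\le p-\ell-L+1$, so $x$ lies in the good region and a valid $J_2^i$ exists. The construction of the $I_2^i$'s from the $J_0^i$'s is the mirror image.

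The main obstacle I anticipate is handling the cyclic geometry of $\mathbb{Z}_p$: both the $s$-constraint arc and the $x$-dependent arc can wrap around, so one must verify in each configuration that the overlap condition does reduce cleanly to the inequality $x\in[|I_2|+L-1,p-\ell-L+1]$. All the numerical inequalities the argument requires ultimately boil down to $\beta p\ge 24$.
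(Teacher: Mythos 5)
Your construction is correct, but it takes a genuinely different route from the paper. You partition the ``source'' sides $I_0$ and $J_0$ into $O(1/\beta)$ consecutive arcs of length at most $\lfloor \beta p/4\rfloor$ and then show, by an explicit computation with cyclic intervals, that each whole arc admits a single length-$L$ block of the partner interval whose sum with it avoids $I_2+J_2$; I checked the endpoint analysis and it does reduce, under the stated cyclic order $I_0,I_1,I_2,I_3$ and the hypotheses $|I_1|,|I_3|\ge \lfloor \beta p/8\rfloor \ge L-1$ and $|I_2|+|J_2|+\ell+L\le p$, to exactly the window $x\in[|I_2|+L-1,\;p-\ell-L+1]$ you state, so the key step is sound. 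The paper instead works on the ``block'' side: it takes a maximal family of disjoint length-$L$ blocks inside $J_2$ (at most $100/(\alpha\beta)$ of them), uses only the pointwise fact that every single $x\in I_0$ has a window of length $\lfloor(\beta/8)\min(|I_2|,|J_2|)\rfloor$ in $J_2$ avoiding $I_2+J_2$ (such a window must contain a full block by maximality), and then defines $I_0^i$ as the set of points assigned to block $i$. Your approach buys a slightly stronger conclusion --- the bound $k\le 9/\beta$ is independent of $\alpha$, and your $I_0^i$ are intervals --- at the cost of the explicit wraparound case analysis; the paper's approach avoids coordinates entirely but its $I_0^i$ are arbitrary subsets and its count genuinely uses the ratio condition $|I_2|\le(2/\alpha)|J_2|$. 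The only loose end in your write-up is the padding to a common $k$: repeating entries of a \emph{set}-valued family does not change its cardinality, so you should either treat the families as indexed lists (which is all the application needs) or split some arcs further to equalize the counts --- a purely cosmetic fix.
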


\begin{proof}
  We construct the families of intervals $\mathcal{I}_0$ and $\mathcal{J}_2$. The construction of the
   $\mathcal{J}_0$ and $\mathcal{I}_2$ is identical.
By the conditions above, for every $x \in I_0$ there is a
  subinterval $J_2'$ of $J_2$ of size $\lfloor  (\beta/8) \min(|I_2|,|J_2|) \rfloor$ such that
$(x+J_2') \cap (I_2+J_2) =\emptyset.$

Let $\mathcal{J}_2$ be a maximal collection of disjoint intervals of size $\lfloor  (\beta/24) \min(|I_2|,|J_2|) \rfloor$
contained inside interval $J_2$.
First note that
\begin{equation*}
  |\mathcal{J}_2| \leq |J_2|/\lfloor (\beta/24) \min(|I_2|,|J_2|) \rfloor \leq (48/ \beta)
  (2/ \alpha) = 100/ (\alpha \beta).
\end{equation*}

Secondly, note that for any  subinterval $J_2'$ of $J_2$ of size $\lfloor (\beta/8) \min(|I_2|,|J_2|) \rfloor$
there exists $J_2'' \in \mathcal{J}_2$
such that $J_2'' \subset J_2'$. Therefore, for any point $x \in I_0$ there exists $J_2^x \in \mathcal{J}_2$ such that
$$(x+J_2^x) \cap (I_2+J_2) =\emptyset.$$

Let $\mathcal{J}_2=\{J_2^1, \hdots, J_2^k\}$.  For each $1 \leq i \leq k$
set $I_0^i=\{x \in I_0 \text{ : } J_2^x=J_2^i\}$, and finally put
$\mathcal{I}_0=\{I_0^1, \hdots, I_0^k\}$.
These families $\mathcal{I}_0$ and $\mathcal{J}_2$ have the desired properties.
\end{proof}

We are now ready to prove Theorem~\ref{all_thm1}.

\begin{proof}[Proof of Theorem~\ref{all_thm1}]
  Fix $\alpha, \beta>0$ and assume $\alpha, \beta<2^{-30}$. Let $\gamma'$ and $c'$ be the output of
  Theorem~\ref{all_thm3} with input $\alpha/2$. Let $\gamma=2^{-30}\min(\gamma',\beta)$.
  Let $\epsilon>0$ and $c''>0$ be the
  output of Theorem~\ref{all_thm2} with input $\alpha, \beta, \gamma$. Choose
  $c=10^{20} \alpha ^{-2} \beta^{-2}\max(c',c'')$.

  Let $A$ and $B$ be
  subsets of $\mathbb{Z}_p$ and let $1 \leq c_1 \leq |A|$ and $1 \leq c_2 \leq |B|$ such that
  $c_1 c_2 \geq c \max(|A|,|B|)$. This forces $\min(c_1,c_2) \geq c$, which, in particular, implies $\min(|A|,|B|) \geq c$. Let $c_1'=c_1 \alpha \beta/ 10^5 $
  and $c_2'=c_2 \alpha \beta/ 10^5 $ and note that $c_1'c_2' \geq 10^{10}\max(c',c'') \max(|A|,|B|)$ and
  $\min(c_1',c_2') \geq 10^{10}\max(c',c'')$.  Observe that we are done unless
\[
    \max_{A' \in A^{(c_1')}, B' \in B^{(c_2')}}|A'+B'| < |A|+|B|-1,
\]
so we may assume that this holds. But then we may apply Theorem~\ref{all_thm2} to deduce that
there are arithmetic progressions $I$ and $J$ with the same common difference such that
\begin{equation}\label{main_2}
    \max(|A\Delta I|, |B \Delta J|) \leq \gamma \min(|I|,|J|).
\end{equation}
Furthermore, we may and shall assume that $I$ and $J$ are intervals.
But then Lemma~\ref{all_lem4} can be used to deduce that $\mathbb{Z}_p$ has disjoint partitions into
consecutive intervals, $\mathbb{Z}_p=I_0\sqcup I_1\sqcup I_2 \sqcup I_3= J_0\sqcup J_1\sqcup J_2 \sqcup J_3$,
that have the following properties. (Here and elsewhere, the notation $\sqcup$ indicates that we are
taking the union of disjoint sets.) On the one hand we have
\begin{equation}\label{main_3}
    \lfloor  (\beta/8) p \rfloor \le  \min(|I_1|, |I_3|,|J_1|,|J_3|) \le \max(|I_1|, |I_3|,|J_1|,|J_3|) \leq  (\beta/4) p
\end{equation}
On the other hand, writing $A_i=A\cap I_i$ and $B_i=B \cap J_i$, we have $|A_1|=|B_1|$,  $|A_3|=|B_3|$, and
\begin{equation}\label{main_6}
    \max(|A\Delta I_2|, |B\Delta J_2|) \leq 2^{10}\gamma \min(|A|, |B|).
\end{equation}
Moreover, if $i \in \{1,3\}$ and $P$ and $Q$ are arithmetic progressions, then
\begin{equation}\label{main_7}
    \max(|A_i \Delta P|, |B_i \Delta Q |) \geq 2^{-10} |A_i|=2^{-10}  |B_i|.
\end{equation}
In particular, if $i \in \{1,3\}$ we have $|A_i|=|B_i|=0$ or $|A_i|=|B_i| \geq 2$. Because $\min(c_1',c_2') \geq c''$ and $c_1'c_2' \geq c''|A_i||B_i|$, we either have $\min(c_1',c_2') \geq |A_i|=|B_i|$ or $\min(c_1',|A_i|) \min(c_2', |B_i|) \geq c''|A_i|=c''|B_i|$. By the
contrapositive of Theorem~\ref{all_thm2}, we deduce that
\begin{equation*}
    \E_{A' \in A_i^{(c_1')}, \ B' \in B_i^{(c_2')}} |A'+B'| > |A_i|+|B_i|-1.
\end{equation*}
We further deduce that there are sets $A_1' \in A_{1}^{(c_1')},\ B_1' \in B_{1}^{(c_2')}, \
A_3' \in A_{3}^{(c_3')}$\ and \ $B_3' \in B_{3}^{(c_2')}$\ such that
\begin{equation}\label{main_008}
|A_1'+B_1'|\geq |A_1|+|B_1|\ \ \text{and} \ \ |A_3'+B_3'|\geq |A_3|+|B_3|.
\end{equation}
Choose a subset $Z$ of $\cup_{i \in \{1,3\}}(A_i'+B_i') $
with $|A_1|+|B_1|+|A_3|+|B_3|$ elements, which, by \eqref{main_6}, satisfies
\begin{equation}\label{ccc_000}
      |Z|=|A_1|+|B_1|+|A_3|+|B_3|\le |A\Delta I_2|+ |B \Delta J_2| \leq 2^{11} \gamma \min(|A|,|B|).
\end{equation}

Now,   by \eqref{main_6} we also have
\begin{equation*}
    (1- 2^{10} \gamma)|A| \leq  |I_2| \leq  (1+2^{10}\gamma) |A| \ \ \ \text{and} \ \ \
    (1- 2^{10} \gamma)|B| \leq  |J_2| \leq  (1+2^{10}\gamma) |B|,
\end{equation*}
and so
\begin{equation}\label{main_9}
    |I_2|+|J_2| \leq (1-\beta)(1+2^{10}\gamma) p \leq (1-\beta/2)p
\end{equation}
and
\begin{equation}\label{main_10}
    (\alpha/2) |I_2| \leq |J_2| \leq (2/ \alpha)|I_2|.
\end{equation}
Furthermore,
we deduce that
\begin{equation}\label{main_11}
 \min(|I_2|,|J_2|) \geq 2^{-1}\min(|A|,|B|) \geq 2^{-1}c \ge 24/\beta.
\end{equation}

First, by construction, we have $A_2 \subset I_2 \text{ and } B_2 \subset J_2$.
Hence, recalling the relations \eqref{main_6}, \eqref{main_9} and \eqref{main_10}, and the fact that $\min(c_1', |A_2|) \min(c_2', |B_2|) \geq 4^{-1}c_1'c_2' \geq c'\max(|A_2'|,|B_2'|)$,  we may apply Theorem~\ref{all_thm3}
with parameters $\alpha/2$, $\gamma'\geq 2^{30} \gamma$ and $c'$   to obtain the following: there exist $A_2' \in A_2^{(c_1')}$
and $B_2' \in B_2^{(c_2')}$ such that
\begin{equation}\label{ddd_000}
|A_2'+B_2'| \geq |A_2|+|B_2|-1.
\end{equation}

Second, by \eqref{main_3},  \eqref{main_9}, \eqref{main_10} and \eqref{main_11}, we may apply
Lemma~\ref{all_lem6} to find four families  of subsets of $\mathbb{Z}_p$, each with
$k\le 100/ \alpha \beta$ sets,
\[
\mathcal{I}_0=\{I^1_0, \hdots, I^k_0\}, \mathcal{I}_2=\{I^1_2, \hdots, I^k_2\}, \mathcal{J}_0=\{J_0^1,
\hdots, J_0^k\}, \mathcal{J}_2=\{J_2^1, \hdots, J_2^k\},
\]
such that
\begin{equation}\label{main_14}
\cup_i I_0^i = I_0,\ \ \cup_i J_0^i = J_0 \ \ \ \text{ and }\ \ \ \cup_i I_2^i \subset I_2, \ \ \ \cup_i J_2^i \subset J_2
\end{equation}
and for every $1 \leq i \leq k$ and we have
\begin{equation}\label{main_15}
|I_2^i|=|J_2^i|= \lfloor  (\beta/24) \min(|I_2|, |J_2|) \rfloor
\end{equation}
and
\begin{equation}\label{main_16}
(I_0^i+J_2^i) \cap (I_2+J_2) = (J_0^i + I_2^i) \cap (I_2+J_2) = \emptyset.
\end{equation}
Writing $A_j^i= A \cap I_j^i$ and $B_j^i=B \cap J_j^i$, by \eqref{main_6}, we have
\begin{equation*}\label{ccc_002}
    \max \{|A_0^i|, |B_0^i|\} \leq 2^{10} \gamma \min(|A|,|B|) .
\end{equation*}
and by \eqref{main_6},   \eqref{main_11} and \eqref{main_15},  we have
\begin{equation}\label{ccc_001}
    \big(\beta/100\big)\ \min(|A|,|B|) \leq \min \{ |A_2^i|, |B_2^i|\} \le \max  \{ |A_2^i|, |B_2^i|\} \leq p/2.
\end{equation}
The last two  inequalities and inequality \eqref{ccc_000} imply that
\begin{equation}\label{ccc_0025}
    10^{10} \max \{|A_0^i|,\ |B_0^i|,\ |Z|\} \leq  \min \{ |A_2^i|, |B_2^i|\} \le \max  \{ |A_2^i|, |B_2^i|\} \leq p/2.
\end{equation}
As we have $\min(c_1',c_2') \geq 10^{10}$ and $c_1'c_2' \geq 10^{10} \max(|A|,|B|) \geq 10^{10}\max(|A_0^i|,|B_0^i|)$, we further  deduce that
\begin{equation}\label{ccc_003}
    \min(c_1',|A_0^i|) \min(c_2', |B_2^i|) \geq 16 |A_0^i| \ \ \text{and} \ \  \min(c_1',|B_0^i|) \min(c_2', |A_2^i|) \geq 16 |B_0^i|.
\end{equation}

By Corollary~\ref{cor_new000} together with \eqref{ccc_0025} and \eqref{ccc_003} we deduce that for $i \in [k]$ there exist $A'^i_0 \in (A_0^i)^{(c_1')}, B'^i_2 \in (B_2^i)^{(c_2')}, A'^i_2 \in (A_2^i)^{(c_1')}, B'^i_0 \in (B_0^i)^{(c_2')}$ such that
\begin{equation*}
     |\cup_i(A'^i_0+B'^i_2)\cup_i(A'^i_2+B'^i_0)\setminus Z| \geq 16^{-1}\min(|A_2^1|, \hdots, |A_2^k|,|B_2^1|, \hdots, |B_2^k|, 32\sum_i|A^i_0|+|B^i_0|)
\end{equation*}
By \eqref{main_6} and \eqref{ccc_001}, we further deduce
\begin{equation}\label{ccc_005}
     |\cup_i(A'^i_0+B'^i_2)\cup_i(A'^i_2+B'^i_0)\setminus Z| \geq |A_0|+|B_0|
\end{equation}

Finally, note that from \eqref{main_3} and \eqref{main_9} it follows that
\[
I_1+J_1, I_2+J_2 \text{ and } I_3+J_3
\]
are disjoint sets, which in particular implies that
$$A_1'+B_1', A_2'+B_2' \text{ and } A_3'+B_3'$$
are disjoint sets. Moreover, by \eqref{main_16}, it follows that
$$\cup_i(A'^i_0+B'^i_2)\cup_i (A'^i_2+B'^i_0) \text{ and }  A_2'+B_2' $$
are disjoint sets.
Let $$A'= A_1'\cup A_2'\cup A_3' \cup_i A'^i_0 \cup_i A'^i_2 \text{ and } B'= B_1'\cup B_2'\cup B_3' \cup_i B'^i_0 \cup_i B'^i_2.$$
From \eqref{main_008}, \eqref{ccc_000}, \eqref{ddd_000} and \eqref{ccc_005} we conclude that
$$|A'+B'| \geq (|A_0|+|B_0|)+(|A_1|+|B_1|)+(|A_2|+|B_2|-1)+(|A_3|+|B_3|) =  |A|+|B|-1$$
and that
\[
|A'| \leq (3+2\times 100 \alpha^{-1}\beta^{-1})c_1' \leq c_1 \text{ and }|B'| \leq (3+2\times 100 \alpha^{-1}\beta^{-1})c_2' \leq c_2.
\]
This concludes the proof of Theorem~\ref{all_thm1}.
\end{proof}

\section{Open problems}

%
%

One very natural question to ask is as follows. Suppose that as usual we are choosing $c_1$ points of $A$ and $c_2$
points of $B$, where $|A|=|B|=n$ and $c_1c_2$ is a fixed multiple of $n$. Are there phenomena that may not occur in
the regime where say we are
choosing $c_1=n$ (in other words, we choose the whole of $A$) and $c_2$ bounded, but might possibly
always hold when both $c_1$ and $c_2$ are of order $\sqrt{n}$?
One example is the following.

\vspace{5pt}
{\bf Question 1. }{\em
  Is there a constant $c$ such that the following is true? If $A$ and $B$ are non-empty subsets of
${\mathbb Z}_p$ with $|A|=|B|=n \leq (p+1)/2$ then there are subsets $A'\subset A$ and
$B'\subset B$ with $|A'|=|B'|\le c \sqrt{n}$ such that $|A'+B'|\ge 2n-1$.
}

\vspace{5pt}

As remarked in the Introduction, this is not true for $c_1=n$ and $c_2$ bounded, in other words for $A'=A$ and
$|B'|$ bounded, as may be seen by taking $A$ and $B$ to be random subsets of ${\mathbb Z}_p$ of size approaching
$p/2$. But it might conceivably be true when we force both $c_1$ and $c_2$ to be large.

\vspace{5pt}
In a similar vein, one might ask whether the case of both $c_1$ and $c_2$ being of order $\sqrt{n}$ is
in fact always the `best' case (where $A$ and $B$ are set of size $n$, say). Thus for Theorem~\ref{all_thm1}
we would be asking the following.

\vspace{5pt}
{\bf Question 2. }{\em
  Let $c>0$ and $c_1=c_1(n)$ be such that whenever $A$ and $B$ are subsets of ${\mathbb Z}_p$ with
  $|A|=|B|=n \leq p/3$ there exist subsets $A'\subset A$ and
  $B'\subset B$, with $|A'| \leq c_1$ and $|B'| \leq cn/c_1$, such that $|A'+B'|\ge 2n-1$. Does it follow that
  whenever $A$ and $B$ are subsets of ${\mathbb Z}_p$ with
  $|A|=|B|=n \leq p/3$ there exist subsets $A'\subset A$ and
  $B'\subset B$ with $|A'|=|B'|\le \sqrt{cn}$ such that $|A'+B'|\ge 2n-1$?
}

\vspace{5pt}

One could also ask what the `worst' case is: is it when $c_1=c$ and $c_2=n$? More generally, is there
`monotonicity' as $c_1$ varies from $c$ to $\sqrt{cn}$?

\vspace{5pt}
It would also be very interesting to obtain good bounds on the constants appearing in our various results.
For example, in Theorem~\ref{all_thm1}, what is the form of the dependence of $c$ on $\alpha$ and $\beta$? 

\vspace{5pt}
Finally, we consider what happens for discrete versions of the Brunn--Minkowski inequality. Green and
Tao~\cite{GreenTao}
showed
that, given a dimension $k$ and a constant $\varepsilon >0$, there exists $t$ such that if $A$ is a subset of
${\mathbb Z}^k$ of size $n$ that is not
contained inside $t$ parallel hyperplanes (intuitively, $A$ `does not look lower-dimensional'),
then $|A+A|  \geq (2^k - \varepsilon) n$. We wonder if the following might be true. Although this is a question
about $\mathbb{Z}$ rather than $\mathbb{Z}_p$, we feel that the methods in this paper are likely to be
relevant.

\vspace{5pt}
{\bf Question 3. }{\em
For a given dimension $k$, does  there exist a constant $c$  such that the
following holds? For any $\varepsilon>0$ there exists $t$ such that if $A$ is a subset of ${\mathbb Z}^k$ of
size $n$ that is not contained in
$t$ parallel hyperplanes, then there exists a subset $A'$ of $A$ of size at most $c \sqrt{n}$ such
that $|A'+A'|  \geq (2^k - \varepsilon) n$.
}

\end{document}